\newtheorem{lemma}{Lemma}[section]
\newtheorem{theorem} {Theorem}[section]
\begin{document}

\title{The generalized $4$-connectivity of bubble-sort graphs}

\author{Leyou Xu, Bo Zhou\footnote{Corresponding author. E-mail: zhoubo@scnu.edu.cn} \\
School of Mathematical Sciences, South China Normal University\\
Guangzhou 510631, P.R. China}

\date{}
\maketitle

\begin{abstract}
For $S\subseteq V(G)$ with $|S|\ge 2$, let $\kappa_G (S)$ denote the maximum number of internally disjoint trees connecting $S$ in $G$. For $2\le k\le n$, the generalized $k$-connectivity  $\kappa_k(G)$ of an $n$-vertex connected graph $G$ is defined to be $\kappa_k(G)=\min \{\kappa_G(S): S\in V(G) \mbox{ and } |S|=k\}$.
The generalized $k$-connectivity can serve for measuring the fault tolerance of an interconnection network.
The bubble-sort graph $B_n$ for $n\ge 2$ is a Cayley graph over the symmetric group of permutations on $[n]$
generated by transpositions from the set $\{[1,2],[2,3],\dots, [n-1,n]\}$.
In this paper, we show that for  the bubble-sort graphs $B_n$ with $n\ge 3$, $\kappa_4(B_n)=n-2$. \\ \\
{\bf Keywords: } generalized $4$-connectivity, internally disjoint trees, bubble-sort graphs, Cayley graphs
\end{abstract}

\section{Introduction}

An interconnection  network is usually modelled by its topological graph, a connected graph $G$ with vertex set $V(G)$ and edge set $E(G)$, where vertices represent processors and edges represent communication links between processors.
For an interconnection network, one mainly concerns about the reliability and fault tolerance, which usually can be measured by the traditional connectivity of its topological graph.
The connectivity $\kappa(G)$ of a graph $G$ is defined to be the minimum cardinality of a subset $S\in V(G)$ such that $G-S$ is disconnected or trivial.
A graph $G$ is said to be $k$-connected  if $\kappa(G)\ge k$.
For each $2$-subset $\{x,y\}$ of vertices of $G$, let $\kappa_G(x,y)$ denote the maximum number of internally vertex disjiont $(x,y)$-paths in $G$.
A well-known theorem of Whitney \cite{Whitney} says that
$\kappa(G)=\min\{\kappa_G(x,y): \{x,y\}\subseteq V(G)\}$.

For a set $S$ of vertices in a connected graph $G$ and  trees  $T_1, \dots, T_{\ell}$ in $G$, we say $T_1, \dots, T_{\ell}$ are  $\ell$ internally edge disjoint trees connecting $S$ in $G$  if these trees are pairwise edge disjoint and $V(T_i)\cap V(T_j)=S$ for every pair $i,j$ of distinct integers with $1\le i,j\le \ell$.

Chartrand et al. \cite{Chartrand} and Hager \cite{Hager} proposed the concept of the generalized $k$-connectivity of an $n$-vertex graph $G$ for $k=2, \dots, n$, see also \cite{H2, COZ}.
For any set $S$ of vertices of  $G$ with $|S|\ge 2$, the generalized connectivity of $S$, written as $\kappa_G(S)$, is the maximum number of internally disjoint trees  connecting $S$ in $G$.
For $2\le k\le |V(G)|$, the generalized $k$-connectivity (or $k$-tree connectivity) of $G$, $\kappa_k(G)$, is the minimum value for $\kappa_G(S)$ over all subsets $S$ of vertices with $|S|=k$.
Note that $\kappa_2(G)$ is the connectivity of $G$, and $\kappa_n(G)$ is the maximum number of edge disjoint spanning trees contained in $G$ \cite{Na,Tu} (or the spanning tree packing number of $G$ \cite{Pa}).
The generalized $k$-connectivity has been used to measure the capability of a network  to connect any $k$ vertices.


Cayley  graphs have been used extensively to design interconnection networks. The Cayley graph  $\mbox{Cay}(X,S)$, where $X$ is a group with identity $e$, $e\not\in S\subseteq X$ and $S$ is closed under inversion, is the graph with vertex set $X$, such that $g$ and $h$ for $g,h\in X$ are adjacent if and only if $h=gs$ for some $s\in S$.

Denote $\mbox{Sym}(n)$ the symmetric group (i.e., the group of all permutations)  on $[n]
=\{1,\dots,n \}$. For convenience, we use $(p_1,\dots,p_n)$ to denote the permutation $\sigma$ such that
$\sigma(i)=p_i$ for $i\in [n]$,
and $[i,j]$ with $1\le i<j\le n$ to denote the permutation $(1, \dots, i-1, j, i+1, \dots, j-1, i, j+1, \dots, n)$,
which is called a transposition.
The composition $\sigma \pi$ of permutations $\sigma$ and $\pi$ is  the function that maps any element $i\in [n]$ to $\sigma(\pi(i))$.
Thus
\[(p_1,\dots, p_i,\dots, p_j,\dots, p_n)[i,j]=(p_1,\dots, p_j,\dots, p_i,\dots, p_n),
\]
 which swaps the objects at positions $i$ and $j$.

Let $\mathcal{T}$ be a set of transpositions from  $[n]$.
The (transposition generating) graph  of $\mathcal{T}$, denoted by $G_\mathcal{T}$, is the graph with vertex set $[n]$ such that, for $i,j\in [n]$,
vertices $i$ and $j$ are adjacent  if and only if $[i,j]\in \mathcal{T}$.
It is known that the Cayley graph  $\mbox{Cay}(\mbox{Sym}(n),\mathcal{T})$ is connected if and only if  $G_\mathcal{T}$ is connected.
If $G_\mathcal{T}$ is the star, then $\mbox{Cay}(\mbox{Sym}(n),\mathcal{T})$ is called a star graph, denoted by  $S_n$.
If  $G_\mathcal{T}$ is the path, then $\mbox{Cay}(\mbox{Sym}(n),\mathcal{T})$ is called a bubble-sort graph, denoted by $B_n$.
Observe that $B_2$ is the $2$-vertex complete graph and $B_3$ is the $6$-vertex cycle.
Generally, $B_n$ is an $n!$-vertex  bipartite, vertex transitive and regular graph of degree $n-1$.

The generalized connectivity has been studied extensively, see the recent book \cite{LMb}.
There has been lots of results on the generalized $3$-connectivity for various classes of graphs, see, e.g., \cite{AC,GLW,LST,LTY,LM,ZH0,ZHW}.
For example, Li et al. \cite{LTY} showed that $\kappa_3(S_n)=\kappa_3(B_n)=n-2$ for $n\ge 3$.
The generalized $4$-connectivity has also received attention, see \cite{LiC,LZ,ZH1,ZH2}. Li et al. \cite{LiC} showed that
 $\kappa_4(S_n)=n-2$ for $n\ge 3$. More closely related results may be found, see, e.g.,  \cite{CLLM,LLS,LLSS}.

In this paper, we will determine the generalized $4$-connectivity of the bubble-sort graph $B_n$. We show the following result.

\begin{theorem}\label{1}
For $n\ge 3$, $\kappa_4(B_n)=n-2$.
\end{theorem}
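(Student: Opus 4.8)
The plan is to prove $\kappa_4(B_n)\le n-2$ and $\kappa_4(B_n)\ge n-2$ separately. The upper bound is short, and I would get it from the already-known value $\kappa_3(B_n)=n-2$: choose a $3$-set $S'\subseteq V(B_n)$ with $\kappa_{B_n}(S')=n-2$, pick any vertex $v\notin S'$, and set $S=S'\cup\{v\}$. If $B_n$ contained $n-1$ internally disjoint trees connecting $S$, then each of them would use at least one edge at $v$; since $v$ has degree $n-1$ and the trees are pairwise edge-disjoint, $v$ would be a leaf of every one of them, and deleting $v$ from each would yield $n-1$ internally disjoint trees connecting $S'$ --- contradicting $\kappa_{B_n}(S')=n-2$. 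Hence $\kappa_{B_n}(S)\le n-2$, so $\kappa_4(B_n)\le n-2$. (The same argument gives $\kappa_{k+1}(G)\le\delta(G)-1$ for any $\delta(G)$-regular $G$ with $\kappa_k(G)<\delta(G)$.)

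For the lower bound I would induct on $n$ using the standard recursive decomposition of $B_n$: fixing the last coordinate, for $i\in[n]$ let $B_n^i$ be the subgraph induced by the permutations $\sigma$ with $\sigma(n)=i$; then $B_n^1,\dots,B_n^n$ partition $V(B_n)$, each $B_n^i\cong B_{n-1}$, every vertex of $B_n^i$ has a unique neighbour outside $B_n^i$ (its image under $[n-1,n]$), and there are exactly $(n-2)!$ edges between each pair $B_n^i,B_n^j$. The base case $n=3$ is trivial ($B_3=C_6$ and $n-2=1$), and I would handle $n=4$ directly or as the first inductive step; for $n\ge5$ I assume $\kappa_4(B_{n-1})\ge n-3$ and also use $\kappa_3(B_{n-1})\ge n-3$ and $\kappa(B_{n-1})=n-2$ (the last in its fan form). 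Given a $4$-set $S$, I would split into cases according to how $S$ meets the copies: $(4)$, $(3,1)$, $(2,2)$, $(2,1,1)$, $(1,1,1,1)$. When some copy carries at least two vertices of $S$, the induction hypothesis (via $\kappa_4$, $\kappa_3$ or $\kappa_2$ of $B_{n-1}$, as appropriate) produces $n-3$ of the desired trees inside that copy, and the remaining tree(s) are obtained by routing through the other copies along the external edges of the vertices of $S$; when $S$ is spread thinly, the trees are assembled from short internally disjoint paths inside the copies (furnished by the connectivity of $B_{n-1}$) joined by inter-copy edges, using the $n-4$ copies that miss $S$ as hubs.

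The hard part will be the thinly-spread cases, above all $(1,1,1,1)$ and to a lesser extent $(2,1,1)$ and $(2,2)$, where no single copy of $B_{n-1}$ contains enough of $S$ for induction to supply all $n-2$ trees: it gives only $n-3$ internally disjoint trees on at most four prescribed vertices of a copy, one short of the target, so the remaining trees must be built explicitly through the copies disjoint from $S$ and via the external edges. Making the many small paths used inside the various copies simultaneously internally disjoint --- and disjoint from the external edges already committed --- is the delicate bookkeeping the proof has to manage; the facts that there are $(n-2)!\gg n$ edges between any two copies and that at most four of the $n$ copies meet $S$ are what leave room for this, but organizing the case analysis so that every subcase closes is where the real work lies.
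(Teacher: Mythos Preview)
Your skeleton matches the paper's: upper bound from regularity, lower bound by induction on $n$ with the five-way case split on the distribution type. Your upper-bound argument via $\kappa_3$ is a valid alternative to the paper's citation of the ``two adjacent minimum-degree vertices'' lemma (Lemma~\ref{delta}).

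The gap is in your reading of the intermediate cases and of where the work lies. In the $(3,1)$ case, $\kappa_3(B_{n-1})=n-3$ gives $n-3$ internally disjoint trees on $\{x,y,z\}$ only, not on $S$: every one of them still has to be extended to reach $w$, and those $n-3$ extensions together with the entire $(n{-}2)$-nd tree must be mutually internally disjoint outside $B_{n-1}^1$. Since each vertex has a single out-edge, this is the crux, not a postscript---and it is worst precisely when $x',y',z'$ all land in the same copy (possibly $w$'s copy) or when $w'$ falls back into $B_{n-1}^1$, because then the out-edges needed for the extensions and for the last tree collide. In the paper the $(3,1)$ case (Case~5) is by far the longest, longer than the other four combined, and it does \emph{not} proceed by applying $\kappa_3$ to $B_{n-1}^1$: the trees are built explicitly from path families, sub-cased on where $x',y',z',w'$ land. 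The induction hypothesis is in fact invoked only in the $(4)$ case; everywhere else---including $(2,2)$, where the paper pairs $n-2$ disjoint $(x,y)$-paths with $n-2$ disjoint $(z,w)$-paths via Menger-type $(X,Z)$-paths in the remaining copies---the trees are constructed by hand. So your intuition is inverted: $(1,1,1,1)$ is comparatively easy because the four vertices are maximally spread and routing is flexible; the clustered $(3,1)$ case is where out-edges are scarcest and where the real case analysis lives.
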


\section{Preliminaries}

For $v\in V(G)$, denote by $N_G(v)$ the set of neighbors of $v$ in $G$, $\delta_{G}(v)=|N_G(v)|$ and $N_G[v]=N_G(v)\cup \{v\}$.
For a subset $S\subseteq V(G)$, denote by $G[S]$ the subgraph of $G$ induced by $S$.

For $x,y\in V(G)$, a path joining $x$ and $y$ in $G$ is called an $(x,y)$-path. For $X,Y\subset V(G)$, an $(X,Y)$-path is a path joining $x$ and $y$ in $G$ for some  $x\in X$ and some $y\in Y$, and any other vertex of  the path  (if any exists) are not in $X\cup Y$.
We write $(x, Y)$-path instead of $(\{x\}, Y )$-path.

\begin{lemma}\label{menger}\cite{BM}
Let $G$ be a $k$-connected graph, and let $X,Y\subset V(G)$ with $|X|, |Y|\ge k$.
Then there are $k$ pairwise vertex disjoint $(X,Y)$-paths in $G$.
\end{lemma}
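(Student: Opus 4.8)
The plan is to deduce this set-to-set statement from the classical two-terminal vertex form of Menger's theorem, which says that for two non-adjacent vertices $s,t$ the maximum number of internally disjoint $(s,t)$-paths equals the minimum number of vertices whose deletion separates $s$ from $t$. First I would build an auxiliary graph $G'$ from $G$ by adding two new vertices $s$ and $t$, joining $s$ to every vertex of $X$ and $t$ to every vertex of $Y$, and adding no edge between $s$ and $t$. Any $(s,t)$-path in $G'$ leaves $s$ through a vertex of $X$ and enters $t$ through a vertex of $Y$, so a family of internally disjoint $(s,t)$-paths in $G'$ restricts to a family of pairwise vertex disjoint paths of $G$, each having one end in $X$ and one end in $Y$. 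Hence it suffices to produce $k$ internally disjoint $(s,t)$-paths in $G'$.

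Since $s$ and $t$ are non-adjacent in $G'$, Menger's theorem reduces this to showing that every set $Z \subseteq V(G') \setminus \{s,t\} = V(G)$ separating $s$ from $t$ in $G'$ has $|Z| \ge k$. Suppose instead that some such separator satisfies $|Z| < k$. Because $|X| \ge k > |Z|$ and $|Y| \ge k > |Z|$, neither $X$ nor $Y$ can be contained in $Z$, so I may choose $x \in X \setminus Z$ and $y \in Y \setminus Z$. In $G' - Z$ the edges $sx$ and $ty$ are present, so $x$ lies in the component of $s$ and $y$ in the component of $t$; as $Z$ separates $s$ from $t$, these two components are distinct, and therefore $x$ and $y$ lie in different components of $G - Z$ as well. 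Consequently $G - Z$ is disconnected while $|Z| < k$, contradicting the assumption that $G$ is $k$-connected.

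Thus every $(s,t)$-separator in $G'$ has at least $k$ vertices, and Menger's theorem yields $k$ internally disjoint $(s,t)$-paths, which project to $k$ pairwise vertex disjoint paths of $G$ from $X$ to $Y$. To meet the stated definition of an $(X,Y)$-path exactly, I would finally trim each such path $v_0, \dots, v_m$ (with $v_0 \in X$ and $v_m \in Y$): letting $a$ be the largest index with $v_a \in X$ and $b$ the smallest index at least $a$ with $v_b \in Y$, the subpath from $v_a$ to $v_b$ has its ends in $X$ and $Y$ and no interior vertex in $X \cup Y$, and passing to subpaths preserves disjointness. The main obstacle is the separator bound: one must argue that a cut $Z$ with $|Z| < k$ forces a genuine small separation inside $G$ (which is where the hypotheses $|X|, |Y| \ge k$ and the $k$-connectivity of $G$ enter), and one must be slightly careful that the projected paths can be cleaned up to satisfy the paper's precise notion of an $(X,Y)$-path.
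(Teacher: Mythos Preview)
Your argument is correct and is exactly the standard textbook derivation of the set form of Menger's theorem from the two-terminal form via auxiliary vertices. The paper does not give its own proof of this lemma but simply cites it from Bondy and Murty, where essentially the same auxiliary-vertex construction is used, so there is nothing further to compare.
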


\begin{lemma}\label{menger1}\cite{BM}
Let $G$ be a $k$-connected graph, and let $x\in V(G)$ and $Y\subset V(G)\setminus\{ x\}$ with $|Y|\ge k$.
Then there are $k$ internally vertex disjoint $(x,Y)$-paths such that $x$ is the only common terminal vertex.
\end{lemma}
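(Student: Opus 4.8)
The plan is to derive this fan lemma from the $(X,Y)$-path version of Menger's theorem already recorded as Lemma \ref{menger}. The guiding observation is that a fan of $(x,Y)$-paths is, after one strips off the common starting vertex $x$, nothing but a family of pairwise disjoint paths running from the neighborhood $N_G(x)$ into $Y$. So I would apply Lemma \ref{menger} not to $x$ directly (a single vertex cannot meet the $|X|\ge k$ hypothesis, and disjoint paths from a single vertex could not share it) but to the set $X=N_G(x)$.

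First I would check the hypotheses of Lemma \ref{menger}. Since $G$ is $k$-connected and the connectivity of a graph never exceeds any of its vertex degrees, we have $k\le \kappa(G)\le \delta_G(x)=|N_G(x)|$, so $|X|=|N_G(x)|\ge k$; and $|Y|\ge k$ by assumption. Hence Lemma \ref{menger} supplies $k$ pairwise vertex disjoint $(N_G(x),Y)$-paths $Q_1,\dots,Q_k$, each having one endpoint in $N_G(x)$, the other endpoint in $Y$, and no further vertex in $N_G(x)\cup Y$. (If $N_G(x)\cap Y\ne\emptyset$, some $Q_i$ may degenerate to a single vertex of $N_G(x)\cap Y$; this will be absorbed uniformly below.)

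Next I would turn each $Q_i$ into an $(x,Y)$-path. Because $x\notin N_G(x)\cup Y$ and the $Q_i$ are pairwise vertex disjoint, the vertex $x$ can occur on at most one of them, and only as an internal vertex. For every $Q_i$ avoiding $x$, I prepend the edge joining $x$ to the endpoint of $Q_i$ lying in $N_G(x)$; since $x\notin V(Q_i)$ this yields a genuine path from $x$ into $Y$ whose vertex set is $\{x\}\cup V(Q_i)$. For the at most one $Q_i$ that passes through $x$, I instead keep only the subpath of $Q_i$ from $x$ to its endpoint in $Y$, discarding the initial segment; this again is a path from $x$ into $Y$ using, besides $x$, only vertices of $Q_i$. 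In every case the constructed path starts at $x$, ends in $Y$, and has internal vertices contained in $V(Q_i)$.

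Finally I would verify that these $k$ paths form the desired fan. Apart from $x$, each one uses only vertices of its own $Q_i$, and $Q_1,\dots,Q_k$ are pairwise vertex disjoint; consequently any two of the constructed paths meet exactly in $x$, so they are internally disjoint and their terminals in $Y$ are distinct, making $x$ the only common terminal vertex. The single delicate point, and essentially the only place the argument could fail if handled carelessly, is the possibility that one of the $(N_G(x),Y)$-paths threads through $x$ itself; the subpath trick of the previous paragraph neutralizes it, while the degenerate single-vertex paths coming from $N_G(x)\cap Y$ are handled by the same prepending step. I therefore expect the proof to be short, with all of its (mild) subtlety concentrated in confirming that no constructed path secretly reuses $x$ or a vertex of another $Q_j$.
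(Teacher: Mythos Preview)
The paper does not supply its own proof of this lemma: it is quoted from Bondy--Murty \cite{BM} as the standard Fan Lemma, a well-known corollary of Menger's theorem. Your derivation from Lemma~\ref{menger} is correct and is essentially the textbook route (apply the set version with $X=N_G(x)$, then prefix the edge from $x$); the only delicate point---that $x$ itself might appear as an internal vertex of one $Q_i$---you handle properly by passing to the subpath from $x$ to the $Y$-end.
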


The following lemma tells us an upper bound on $\kappa_{k}(G)$ for a graph $G$.

\begin{lemma}\cite{LiSS2}\label{delta}
Let $G$ be a connected graph with minimum degree $\delta$.
Then $\kappa_k(G)\le \delta$ for $3\le k\le |V(G)|$.
Furthermore, if there exist two adjacent vertices of degree $\delta$ in $G$, then $\kappa_k(G)\le \delta-1$.
\end{lemma}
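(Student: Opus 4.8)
The statement to be proved is a general upper bound on $\kappa_k(G)$ in terms of the minimum degree $\delta$, so the overall plan is to exploit the definition $\kappa_k(G)=\min\{\kappa_G(S):|S|=k\}$: it suffices to exhibit a \emph{single} set $S$ with $|S|=k$ for which $\kappa_G(S)\le\delta$ (and, under the extra hypothesis, a single $S$ with $\kappa_G(S)\le\delta-1$). The driving observation is purely local and uses only the edge-disjointness built into the notion of internally disjoint trees: if $w\in S$, then every tree $T$ connecting $S$ contains $w$, and since $T$ is connected with $|V(T)|\ge k\ge 2$, the vertex $w$ has degree at least one in $T$, i.e.\ $T$ uses at least one edge incident to $w$. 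Because the trees in any internally disjoint family are pairwise edge disjoint, the edges they use at $w$ are all distinct, so the number of trees is at most $\delta_G(w)$.

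For the first inequality I would pick a vertex $v$ with $\delta_G(v)=\delta$ and any $S$ with $v\in S$ and $|S|=k$ (possible since $k\le|V(G)|$). Applying the observation at $w=v$ gives $\kappa_G(S)\le\delta_G(v)=\delta$, hence $\kappa_k(G)\le\delta$.

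For the sharper bound, suppose $u,v$ are adjacent with $\delta_G(u)=\delta_G(v)=\delta$, and choose $S$ with $\{u,v\}\subseteq S$ and $|S|=k$. Fix an internally disjoint family $T_1,\dots,T_\ell$ connecting $S$ and let $E_u$ (resp.\ $E_v$) be the set of edges incident to $u$ (resp.\ $v$) used by some $T_i$; then $|E_u|,|E_v|\le\delta$. I would split into two cases according to whether the edge $uv$ is used. If no tree uses $uv$, then $E_u$ consists only of edges from $u$ to vertices other than $v$, so $|E_u|\le\delta-1$; since each $T_i$ uses at least one edge of $E_u$ and these are disjoint, $\ell\le\delta-1$. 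If some (hence, by edge-disjointness, exactly one) tree uses $uv$, then $E_u\cap E_v=\{uv\}$, so $|E_u\cup E_v|=|E_u|+|E_v|-1\le 2\delta-1$. Here I would argue that each $T_i$ uses at least two edges of $E_u\cup E_v$: it needs at least one edge at $u$ and at least one at $v$, and if a single edge served both it would have to be $uv$ with $u,v$ as leaves, forcing $T_i$ to be the lone edge $uv$, contradicting $k\ge 3$. Summing the disjoint contributions gives $2\ell\le|E_u\cup E_v|\le 2\delta-1$, whence $\ell\le\delta-1$. In either case $\kappa_G(S)\le\delta-1$, so $\kappa_k(G)\le\delta-1$.

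The routine part is the edge-counting; the step I expect to require the most care is the second case of the sharper bound, namely the justification that every tree consumes at least two distinct edges incident to $\{u,v\}$ together with the correct bookkeeping for the shared edge $uv$ (the identity $E_u\cap E_v=\{uv\}$). This is exactly where the hypothesis $k\ge 3$ is essential, since for $k=2$ a tree may legitimately equal the single edge $uv$ and the improvement to $\delta-1$ fails.
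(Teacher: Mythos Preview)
Your argument is correct. The paper does not supply its own proof of this lemma; it is quoted from \cite{LiSS2} and used as a black box, so there is nothing in the paper to compare against. Your edge-counting at a minimum-degree vertex for the first bound, and the two-case analysis on whether the edge $uv$ appears in some tree for the second, are exactly the standard proof (and essentially what one finds in the cited source). The only place requiring care---showing that when $uv\in E(T_i)$ the tree $T_i$ still contributes at least two edges to $E_u\cup E_v$, via the observation that otherwise $T_i$ would collapse to the single edge $uv$, impossible for $k\ge 3$---is handled correctly.
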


\begin{lemma}\label{kappa}\cite{EC}
$\kappa(B_n)=n-1$ for $n\ge 2$.
\end{lemma}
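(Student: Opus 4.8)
The plan is to prove the two inequalities $\kappa(B_n)\le n-1$ and $\kappa(B_n)\ge n-1$ separately. The upper bound is immediate: $B_n$ is $(n-1)$-regular, and any graph on at least two vertices satisfies $\kappa(G)\le\delta(G)$ (delete the neighbourhood of a minimum-degree vertex), whence $\kappa(B_n)\le n-1$. All the content lies in the lower bound, i.e.\ in showing that $B_n-F$ is connected for every $F\subseteq V(B_n)$ with $|F|=n-2$, which I would establish by induction on $n$.

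For the induction I would exploit the hierarchical structure of $B_n$. Partitioning $\mathrm{Sym}(n)$ according to the value $p_n$ in the last coordinate yields $n$ classes; the generators $[1,2],\dots,[n-2,n-1]$ fix $p_n$ and induce on each class a copy of $B_{n-1}$, which I call $H_1,\dots,H_n$, while the single remaining generator $[n-1,n]$ supplies all the edges between classes. The key observation is that each vertex is incident with exactly one such cross-edge, which joins it to the copy indexed by its $(n-1)$st coordinate; consequently the cross-edges between any two fixed copies $H_i$ and $H_j$ form a matching of size $(n-2)!$, and every pair of copies is joined by at least one cross-edge. The base cases $B_2=K_2$ and $B_3=C_6$ have connectivity $1$ and $2$, which match $n-1$.

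Assuming $B_{n-1}$ is $(n-2)$-connected, I would fix $F$ with $|F|=n-2$ and set $f_i=|F\cap V(H_i)|$, so $\sum_i f_i=n-2$, and then split into two cases. If some copy carries all the faults, say $f_1=n-2$, then $H_2,\dots,H_n$ are fault-free and connected, the surviving cross-edges make their union connected, and every surviving vertex of $H_1$ reaches this backbone along its own (fault-free) cross-edge. Otherwise every $f_i\le n-3$, so each $H_i-F$ remains connected by the induction hypothesis, and it only remains to connect these $n$ pieces through surviving cross-edges. Here the decisive count is that severing the connection between a fixed pair $H_i,H_j$ requires deleting at least $(n-2)!$ vertices, one per edge of their matching; since $|F|=n-2\le(n-2)!$ with equality only when $n\in\{3,4\}$, at most one pair of copies can be severed, so the quotient graph on $H_1,\dots,H_n$ is $K_n$ minus at most one edge and hence connected.

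The main obstacle I anticipate is precisely this cross-edge bookkeeping in the second case: one must verify carefully that a single faulty vertex destroys at most one cross-edge, so that the $(n-2)!$ matching edges between two copies cannot all be removed by only $n-2$ faults when $n\ge 5$, while checking that the boundary values $n\in\{3,4\}$ still leave $K_n$ minus a single edge (hence connected). The remaining structural facts—the isomorphism $H_i\cong B_{n-1}$ and the routing of each cross-edge to the copy indexed by $p_{n-1}$—are routine but need to be pinned down. Once these are in place, the two cases together show $B_n-F$ is connected, giving $\kappa(B_n)\ge n-1$ and therefore $\kappa(B_n)=n-1$.
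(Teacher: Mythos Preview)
The paper does not supply its own proof of this lemma; it is quoted from \cite{EC} and used as a black box. So there is no argument in the paper to compare yours against.

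That said, your proposed proof is correct and is in fact the standard way this result is established for Cayley graphs on $\mathrm{Sym}(n)$ generated by transposition trees. The upper bound is immediate from regularity, and your inductive lower bound via the last-coordinate decomposition $B_n = H_1 \cup \cdots \cup H_n$ with $H_i \cong B_{n-1}$ is exactly the structure the paper itself exploits throughout (it calls the $H_i$ ``main parts'' and records the matching size $(n-2)!$ between any two of them just before Lemma~\ref{ijout}). Your two cases are handled correctly: in Case~1 every survivor of the overloaded copy escapes along its unique cross-edge to the fault-free backbone, and in Case~2 the key count---each faulty vertex kills exactly one cross-edge, so severing a single pair of copies already costs $(n-2)!$ faults---shows the auxiliary graph on the copies is $K_n$ minus at most one edge. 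Note also that for $n=3$ Case~2 is vacuous (it would require all $f_i\le 0$), so the boundary analysis only matters at $n=4$, where $(n-2)!=n-2=2$ and at most one pair can be cut.
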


\begin{lemma}\label{kappa3}\cite{LTY}
$\kappa_3(B_n)=n-2$ for $n\ge 3$.
\end{lemma}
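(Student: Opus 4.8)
The plan is to prove the two inequalities $\kappa_3(B_n)\le n-2$ and $\kappa_3(B_n)\ge n-2$ separately. The upper bound is immediate from Lemma \ref{delta}: since $B_n$ is $(n-1)$-regular its minimum degree is $\delta=n-1$, and being a connected regular graph on more than one vertex it contains a pair of adjacent vertices both of degree $\delta$, whence $\kappa_3(B_n)\le\delta-1=n-2$. The real content is the lower bound, which I would establish by induction on $n$.

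For the base case $n=3$, the graph $B_3$ is a $6$-cycle, which is connected, so every $3$-subset $S$ is joined by at least one tree and $\kappa_3(B_3)\ge 1=n-2$. For the inductive step I would use the recursive structure of $B_n$: fixing the symbol placed in position $n$ partitions $V(B_n)$ into $n$ blocks $B_n^1,\dots,B_n^n$, each inducing a copy of $B_{n-1}$ (the generators $[1,2],\dots,[n-2,n-1]$ act within a block), while the single generator $[n-1,n]$ contributes a perfect matching across blocks, with exactly $(n-2)!$ edges between any two distinct blocks. Inside a block I may invoke $\kappa(B_n^i)=n-2$ (Lemma \ref{kappa} applied to $B_{n-1}$) together with the inductive hypothesis $\kappa_3(B_n^i)\ge n-3$.

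Given $S=\{x,y,z\}$, I would distinguish cases by how $S$ meets the blocks. If $x,y,z$ all lie in one block $B_n^i$, I take $n-3$ internally disjoint trees inside $B_n^i$ from the inductive hypothesis, then form one additional tree by sending $x,y,z$ along their $[n-1,n]$-edges to images in other blocks and joining these three images by a tree routed inside $B_n-B_n^i$, which is connected; this extra tree is internally disjoint from the first $n-3$ because its interior lies entirely outside $B_n^i$, giving $n-2$ trees in total. If $S$ splits as two-plus-one or one-plus-one-plus-one across blocks, I would apply the Menger-type Lemmas \ref{menger} and \ref{menger1} to draw $n-2$ pairwise internally disjoint paths from the relevant vertices onto the inter-block edge sets, and then complete each such path family into a tree using the connectivity of the individual blocks, choosing the inter-block edges and interior routings so that no two trees reuse a vertex.

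The main obstacle is the bookkeeping in the mixed cases: one must route $n-2$ connectors across blocks while keeping the trees internally disjoint, and the inter-block edges, although $(n-2)!$ in number between a given pair, are constrained in which block-pairs they join (and for small $n$, e.g.\ $n=4$ where only two edges join two blocks, the supply is tight). Consequently the disjoint path systems produced by Menger's theorem must be selected compatibly with the intra-block connectivity used to absorb them into trees, and the delicate point is verifying that such a compatible selection always exists, especially when two of $x,y,z$ share a block and their connectors compete for the same boundary vertices. I expect the vertex-transitivity of $B_n$ to help by letting me normalize the location of $S$ and thereby cut down the number of essentially distinct configurations that need to be checked.
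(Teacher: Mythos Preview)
This lemma is not proved in the present paper; it is quoted from Li, Tu, and Yu \cite{LTY}, so there is no in-paper argument to compare against. Your outline is the natural strategy and is, in broad terms, the one carried out in \cite{LTY}: the upper bound via Lemma~\ref{delta}, and the lower bound by induction on $n$ using the decomposition of $B_n$ into $n$ copies of $B_{n-1}$ linked by the $[n-1,n]$-matching, with a case split on how $S=\{x,y,z\}$ is distributed among the copies.

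That said, what you have written is a plan rather than a proof. The all-in-one-block case is complete as stated. The genuine content lies in the two-plus-one and one-plus-one-plus-one cases, and there your description stops short: invoking Lemmas~\ref{menger} and~\ref{menger1} produces disjoint path systems, but those lemmas give no control over \emph{which} block the far endpoints land in, so the ``compatible selection'' you anticipate must be constructed explicitly---typically by first fixing a designated set of out-neighbours (so that the target set $Y$ lies in prescribed blocks) and only then applying the fan lemma. Vertex-transitivity lets you normalise the block labels, but it does not collapse the internal case analysis (e.g.\ whether the out-neighbours of the two co-located vertices fall in the same or different blocks). Until those constructions are written out, the argument is incomplete, though nothing in your plan is wrong in principle.
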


As we consider the bubble-sort graph $B_n$, we may suppose without loss of generality that $\mathcal{T}=\{[i,i+1]: i\in [n-1]\}$. Then
$E(G_\mathcal{T})=\{i(i+1): i\in [n-1]\}$.

For $i\in [n]$, let $\mbox{Sym}_i(n)$ denote the set of all permutations of $[n]\setminus\{i\}$.
For $\sigma=(p_1,\dots,p_{n-1})\in \mbox{Sym}_i(n)$,
we have $\sigma(j)=p_j$ for $j<i$ and $\sigma(j)=p_{j-1}$ for $j>i$.
Let
\[
V_i=\{(p_1,\dots,p_{n-1},i): (p_1,\dots,p_{n-1})\in \mbox{Sym}_i(n)\}
\]
and $B_{n-1}^i=B_n[V_i]$ for $i\in [n]$.
Then $V(B_n)$ can be partitioned  into $V_1, \dots, V_n$ and   $B_{n-1}^i\cong B_{n-1}$ for $i\in [n]$.
We call $B_{n-1}^1,\dots,B_{n-1}^n$ the main parts of $B_n$.

If  $u=(p_1,\dots,p_{n-1},k)\in V_k$, then  $u$ is in the main part $B_{n-1}^k$.
Let $u_i=u[i,i+1]$ for $i\in [n-1]$.
Then $N_{B_n}(u)=\{u_i: i\in [n-1]\}$ with
$u_1,\dots,u_{n-2}\in V_k$ and $u_{n-1}\in V_{p_{n-1}}$.
Note that $u_{n-1}$ is the unique neighbor of $u$ outside $B_{n-1}^k$, which we call  the out-neighbor of $u$, written as $u'$ throughout this paper.
The other $n-2$ neighbors of $u$ are called the in-neighbors of $u$.
The out-neighbor of $u_i$ is $u_i'=u_i[n-1,n]$ for $i\in [n-1]$.
Then $u_i'\in V_{p_{n-1}}$ for $i\in [n-3]$ and $u_{n-2}'\in V_{p_{n-2}}$.
Note that  $u_{n-1}'=u$.

It can be verified that any two distinct vertices have different out-neighbors and $|((\cup_{u\in V_i} N_{B_n}(u))\setminus V_i)\cap V_j|=(n-2)!$ for $i,j\in [n]$ with $i\neq j$, see \cite{EC}.

For $\{i,j\}\subset [n]$ with  $n\ge 3$, it is shown in \cite{LTY} that
\[
\kappa(B_n[V_i\cup V_j])=n-2.
\]
By the proof in \cite{LTY},  there are $n-2$ internally vertex disjoint paths between any two vertices in
$B_n[V_i\cup V_j]$. So we have the following result.

\begin{lemma}\label{ijout}
Let $B_{n-1}^1, \dots, B_{n-1}^n$ be the main parts of $B_n$, where $n\ge 3$.
For any $\emptyset\ne I\subset [n]$,
\[
\kappa(B_n[\cup_{i\in I}V_i])=n-2.
\]
\end{lemma}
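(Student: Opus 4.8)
The plan is to prove the lower bound $\kappa(B_n[\cup_{i\in I}V_i])\ge n-2$ by induction on $|I|$, since the upper bound follows from the fact that each vertex in $B_{n-1}^i$ has exactly $n-2$ in-neighbors, so deleting the $n-2$ in-neighbors of a vertex $u$ with $u'\notin \cup_{i\in I}V_i$ (or, if $|I|=n$, observing the graph is $(n-2)$-regular restricted to... actually $B_n$ is $(n-1)$-regular and has a perfect matching of out-edges, so a degree count gives $\le n-2$ when $I\neq[n]$; for $I=[n]$ the bound is exactly $\kappa(B_n)=n-1\ge n-2$). The base case $|I|=1$ is $\kappa(B_{n-1})=n-2$ by Lemma~\ref{kappa}, and the case $|I|=2$ is the quoted result $\kappa(B_n[V_i\cup V_j])=n-2$ from \cite{LTY}. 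For the case $|I|=n$ we simply invoke Lemma~\ref{kappa}, so the work is in the range $3\le |I|\le n-1$.

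For the inductive step, write $I=I'\cup\{t\}$ with $|I'|\ge 2$ and set $H=B_n[\cup_{i\in I}V_i]$, $H'=B_n[\cup_{i\in I'}V_i]$, so by induction $H'$ is $(n-2)$-connected. I would show $H$ is $(n-2)$-connected by verifying that for any set $F\subseteq V(H)$ with $|F|\le n-3$, the graph $H-F$ is connected. Split $F$ into $F'=F\cap V(H')$ and $F_t=F\cap V_t$. If $F_t=\emptyset$ then $H'-F'$ is connected and every vertex of $V_t\setminus F$ has an in-neighbor path to... no, $V_t$ is only joined to the rest of $H$ via out-edges. The cleaner route: $B_{n-1}^t\cong B_{n-1}$ is $(n-2)$-connected by Lemma~\ref{kappa}, so $B_{n-1}^t-F_t$ is connected as long as $|F_t|\le n-3$; and $H'-F'$ is connected by induction as long as $|F'|\le n-3$. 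Since $|F'|+|F_t|\le n-3$, both inequalities hold. It then suffices to exhibit, after deleting $F$, at least one surviving edge between $B_{n-1}^t$ and $H'$. The number of out-edges from $V_t$ landing in $\cup_{i\in I'}V_i$ is $\sum_{i\in I'}(n-2)! = |I'|\,(n-2)! \ge 2(n-2)!$, and these $|I'|(n-2)!$ edges form a matching (distinct vertices have distinct out-neighbors), so deleting $|F|\le n-3 < (n-2)!$ vertices (valid for $n\ge 4$; the cases $n=3$ are small and checked directly) destroys at most $n-3$ of them, leaving at least one intact. Hence $H-F$ is connected, so $\kappa(H)\ge n-2$.

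The main obstacle is making the "one surviving cross-edge" count rigorous and checking that the matching structure genuinely lets each deleted vertex kill at most one cross-edge: a vertex $v\in V_t$ can be incident to at most one out-edge, and a vertex $w\in \cup_{i\in I'}V_i$ can be the out-neighbor of at most one vertex of $V_t$, so indeed each element of $F$ destroys at most one of the $|I'|(n-2)!$ cross-edges, and $|I'|(n-2)! > n-3$ holds comfortably for all $n\ge 3$ with $|I'|\ge 2$ (when $n=3$, $|I'|(n-2)! = 2 > 0 = n-3$). A secondary point requiring care is the degenerate regime where $|F_t|=n-3$ forces $F'=\emptyset$, or vice versa, but since Lemma~\ref{kappa} gives $(n-2)$-connectivity of each piece and our budget is only $n-3$, neither piece can be disconnected, so the argument goes through uniformly. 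This completes the induction and the proof.
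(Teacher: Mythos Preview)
Your argument is correct. The induction on $|I|$ with the cut-set analysis works: both pieces $B_{n-1}^t-F_t$ and $H'-F'$ stay connected after removing at most $n-3$ vertices, the cross-edges between $V_t$ and $\cup_{i\in I'}V_i$ do form a matching of size $|I'|\,(n-2)!$, and so at most $n-3$ deletions cannot kill them all. The upper bound via a vertex whose out-neighbor lies outside $\cup_{i\in I}V_i$ is also fine (and such a vertex exists precisely because $I$ is a \emph{proper} subset of $[n]$; note that $|I|=n$ is not covered by the lemma, and indeed $\kappa(B_n)=n-1$, so you can drop that aside).

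Your route, however, is more elaborate than what the paper actually does. The paper simply observes that the case $|I|=2$ (taken from \cite{LTY}) already yields $n-2$ internally disjoint $(u,v)$-paths inside $B_n[V_i\cup V_j]$ for any $u\in V_i$, $v\in V_j$; since $B_n[V_i\cup V_j]$ is a subgraph of $B_n[\cup_{k\in I}V_k]$ whenever $i,j\in I$, those same paths witness $\kappa\ge n-2$ for arbitrary $I$. (When $u,v$ lie in the same $V_i$, Lemma~\ref{kappa} applied to $B_{n-1}^i$ gives the paths.) So the paper's proof is essentially a one-line monotonicity argument once the two-part case is in hand. Your inductive cut argument is a legitimate alternative and is a bit more structural---it would survive even without knowing the explicit path construction of \cite{LTY}, needing only the connectivity values---but it is longer than necessary here.
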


Suppose that $T_1, \dots, T_s$ are $s\ge 2$ trees such that $|V(T_i)\cap V(T_j)|=0,1$ for any $i,j$ with $1\le i<j\le s$. If
the graph with vertex set $\cup_{i=1}^sV(T_i)$ and edge set $\cup_{i=1}^sE(T_i)$ connected, then it is a tree, denoted by $T_1+\dots +T_s$. It is possible that $T_i$ is  a path.

Fix $i\in [n]$. For $j\in [n]\setminus \{i\}$, let
\[
V_j^i=\{(p_1,\dots,p_{n-2},j,i): (p_1,\dots,p_{n-2})\in\mbox{Sym}_{i,j}(n)\},
\]
where $\mbox{Sym}_{i,j}(n)$ denotes the set of permutations of $[n]\setminus\{i,j\}$.
Denote the induced subgraph $B_n[V_j^i]$ by $B_{n-2}^{(i,j)}$.

\section{Proof of Theorem \ref{1}}

\begin{proof}[Proof of Theorem \ref{1}]
By Lemma \ref{delta} and the fact that $B_n$ is $(n-1)$-regular, we have  $\kappa_4(B_n)\le n-2$.
So it suffices to show that $\kappa_4(B_n)\ge n-2$.
Let $S$ be an arbitrary subset of $V(B_n)$ with $|S|=4$, say
 $S=\{x,y,z,w\}$.
Then, it suffices to show that
\[
\mbox{there are $n-2$ internally edge disjoint trees connecting $S$ in $B_n$}.
\]
We prove this statement by induction on $n$.

If $n=3$, it is evident that  there exists a tree containing vertices in $S$, so the statement is true.
Suppose that $n \ge 4$ and the statement is true for $B_{n-1}$.

Recall that $B_{n-1}^1, \dots, B_{n-1}^n$ are the main parts of $B_n$.
We consider the following five cases separately in subsections 3.1--3.5:
\begin{itemize}
\item {\bf Case 1.}  The four vertices of $S$ lie in a main part of $B_n$;

\item {\bf Case 2.}  Two vertices of $S$ lie in a main part and the other two vertices in $S$ lie in another main part of $B_n$;

\item {\bf Case 3.}  The four vertices of $S$ lie in three different  main parts of $B_n$;

\item {\bf Case 4.} The four vertices of $S$ lie in four different  main parts of $B_n$;

\item {\bf Case 5.} Three vertices of $S$ lie in a main part and the remaining one lies in another main part of $B_n$.
\end{itemize}

\subsection{Case 1}

Assume that $x,y,z,w$ are in $B_{n-1}^1$.
Note that $B_{n-1}^1\cong B_{n-1}$.
By the induction hypothesis, there are $n-3$ internally edge disjoint trees $T_1, \dots, T_{n-3}$ connecting $S$ in the main part $B_{n-1}^1$ of $B_n$.
By Lemma \ref{ijout}, $B_n[V(B_n)\setminus V_1]$ is connected, so there is a spanning tree $T$ in $B_n[V(B_n)\setminus V_1]$. Note that $x',y',z',w'$ are distinct four vertices in $B_n[V(B_n)\setminus V_1]$.
So  $T_{n-2}=T+xx'+yy'+zz'+ww'$  is a tree containing vertices in  $S$ and $V(T_{n-2})\cap V_1=S$.
It thus follows that  $T_1, \dots, T_{n-2}$ are $n-2$ internally edge disjoint trees connecting $S$ in $B_{n}$.

\subsection{Case 2}

Assume that $x,y\in V(B_{n-1}^1)$ and $z,w \in V(B_{n-1}^2)$.
By Lemma \ref{kappa},  $\kappa(B_{n-1}^2)=\kappa(B_{n-1})=n-2$, so there are $n-2$  internally vertex disjoint $(z,w)$-paths $Q_1, \dots, Q_{n-2}$ in $B_{n-1}^2$. Since $|N_{B_{n-1}^2}(z)|=n-2$ and $Q_1, \dots, Q_{n-2}$ are internally vertex disjoint $(z,w)$-paths, we may assume that $V(Q_i)\cap N_{B_{n-1}^2}(z)=\{z_i\}$ for  $i\in [n-2]$.

\noindent
{\bf Case  2.1.} One of $x'$ and $y'$, say $x'$, is not in $B_{n-1}^2$,  and one of $z'$ and $w'$, say $z'$, is not in $B_{n-1}^1$.

By Lemma \ref{kappa}, there are $n-2$ internally vertex disjoint $(x,y)$-paths  $L_1$, $\dots, L_{n-2}$, and 
 we may assume that $V(L_i)\cap N_{B_{n-1}^1}(x)=\{x_i\}$ for $i\in [n-2]$.

Note that  there is exactly one in-neighbor of $x$, say $x_{n-2}$, whose out-neighbor and $x'$ lie in different main parts, and
there is exactly one in-neighbor of $z$, say $z_{n-2}$, whose out-neighbor and $z'$ lie in different main parts.
Let
\[
X=\{x_i': i\in [n-3]\}\cup \{x'\} \mbox{ and } Z=\{z_i': i\in [n-3]\}\cup \{z'\}.
\]
It is evident that $|X|=|Z|=n-2$. By Lemmas \ref{menger} and \ref{ijout}, there are $n-2$ disjoint $(X,Z)$-paths $R_1,\dots,R_{n-2}$ in $B_n[\cup_{i=3}^{n} V_i]$.
Suppose that $x'\in V(R_{n-2})$, $x_i'\in V(R_i)$ for $i\in [n-3]$, $z'\in V(R_s)$ for some $s\in [n-2]$, $z_i'\in V(R_i)$ for $i\in [n-3]\setminus\{s\}$ and $z_s'\in V(R_{n-2})$.
If $s=n-2$, let
\[
T_i=L_i+x_ix_i'+R_i+z_i'z_i+Q_i\mbox{ for } i\in[n-3]
\]
and
\[
T_{n-2}=L_{n-2}+xx'+R_{n-2}+z'z+Q_{n-2}.
\]
Otherwise, let
\[
T_i=L_i+x_ix_i'+R_i+z_i'z_i+Q_i\mbox{ for } i\in [n-3]\setminus\{s\},
\]
\[
T_s=L_s+x_sx_s'+R_{s}+z'z+Q_{n-2}
\]
and
\[
T_{n-2}=L_{n-2}+xx'+R_{n-2}+z_s'z_s+Q_s.
\]
Then it is easy to see that $T_1,\dots,T_{n-2}$ are $n-2$ internally edge disjoint trees connecting $S$.

\noindent
{\bf Case  2.2.} $x'$ and $y'$ are both in $B_{n-1}^2$ and one of $z'$ and $w'$ 
is not in $B_{n-1}^1$, or $z'$ and $w'$ are both in $B_{n-1}^1$ and one of $x'$ and $y'$ is not in $B_{n-1}^2$.

Assume that $x'$ and $y'$ are both in $B_{n-1}^2$ and one of $z'$ and $w'$, say $z'$, is not in $B_{n-1}^1$.

Suppose that $n=4$. Then $x$ and $y$ are adjacent.
If $w'\in V_1$, then $w'=x$ or $w'=y$, say $w'=y$.
Let $x_1=x[2,3]$ and $y_1=y[2,3]$.
Then $x_1',y_1',z'\in V_3\cup V_4$. As $B_4[V_3\cup V_4]$ is connected,
there is a tree $T_1$ containing $x_1',y_1',z'$.
Let
\[
T_1^*=xx_1+x_1x_1'+yy_1+y_1y_1'+T_1+z'z+Q_1
\]
and
\[
T_2=xy+yw+Q_2.
\]
Then $T_1^*$ and $T_2$ are two internally edge disjoint trees connecting $S$.
Otherwise, $w'\notin V_1$.
Since $x',y'\in V_2$, there is a tree $F_1$  in $B_3^2$ containing $x',y',z,w$.
Similarly, there is a tree $F_2$  in $B_4[V_1\cup V_3\cup V_4]$ containing $\{x,y,z',w'\}$.
Then $F_1^*=F_1+x'x+y'y$ and
$F_2^*=F_2+w'w+z'z$  are  two internally edge disjoint trees connecting $S$.

Suppose that $n\ge 5$.
For $j=2,\dots,n$, let
\[
V_j^1=\{(p_1,\dots,p_{n-2},j,1): (p_1,\dots,p_{n-2})\in\mbox{Sym}_{1,j}(n)\},
\]
where $\mbox{Sym}_{1,j}(n)$ denotes the set of permutations of $[n]\setminus\{1,j\}$.
Denote the induced subgraph $B_n[V_j^1]$ by $B_{n-2}^{(j)}$.

Since $B_{n-2}^{(1,j)}\cong B_{n-2}$ and $B_{n-1}^1\cong B_{n-1}$, we view  $B_{n-2}^{(1,2)},\dots,B_{n-2}^{(1,n)}$ as the main parts of $B_{n-1}^1$.
Then  $x$ and $y$ are in $B_{n-2}^{(1,2)}$.
By Lemma \ref{kappa}, $\kappa(B_{n-2}^{(1,2)})=n-3$, so there exist $n-3$ internally disjoint $(x,y)$-paths $L_1,L_2,\dots,  L_{n-3}$ in $B_{n-2}^{(1,2)}$.
Note that there are $n-3$ vertices adjacent to $x$ in $B_{n-2}^{(1,2)}$.
Then each $L_i$ contains exactly one vertex in $N_{B_{n-2}^{(1,2)}}(x)$, which we denote by $x_i$, where $i\in [n-3]$.

Assume that $z_{n-2}$ is the vertex whose out-neighbor is not in the same main part as $z'$.

Let $x=(p_1,\dots, p_{n-2},2,1)$ and $y=(r_1,\dots, r_{n-2},2,1)$.
Let $x_{n-2}=x[n-2,n-1]$, $x_{n-2,1}=x_{n-2}[n-3,n-2]$, $x_{n-2,2}=x_{n-2,1}[n-4,n-3]$, $x_{n-2,3}=x_{n-2,2}[n-3,n-2]$ and $\widehat{x}_{n-2}=x_{n-2,3}[n-2,n-1]$. That is,
\[
x_{n-2}=(p_1,\dots,p_{n-4}, p_{n-3},2, p_{n-2},1),
\]
\[
x_{n-2,1}=(p_1,\dots, p_{n-4},2,p_{n-3}, p_{n-2},1),
\]
\[
x_{n-2,2}=(p_1,\dots,2,p_{n-4},p_{n-3},p_{n-2},1),
\]
\[
x_{n-2,3}=(p_1,\dots,2,p_{n-3},p_{n-4},p_{n-2},1)
\]
and
\[
\widehat{x}_{n-2}=(p_1,\dots, 2,p_{n-3},p_{n-2}, p_{n-4},1).
\]
Let
\[
P_x=xx_{n-2}x_{n-2,1}x_{n-2,2}x_{n-2,3}\widehat{x}_{n-2}.
\]
There are three probabilities:
(i) If $\{r_{n-3},r_{n-2}\}=\{p_{n-3},p_{n-2}\}$, then set
$y_{n-2}=y[n-2,n-1]$,
$y_{n-2,1}=y_{n-2}[n-3,n-2]$,
$y_{n-2,2}=y_{n-2,1}[n-4,n-3]$,
$y_{n-2,3}=y_{n-2,2}[n-3,n-2]$,
$\widehat{y}_{n-2}=y_{n-2,3}[n-2,n-1]$
and
$P_y=yy_{n-2}y_{n-2,1}y_{n-2,2}y_{n-2,3}\widehat{y}_{n-2}$.
(ii) If $r_{n-2}\in \{p_{n-3},p_{n-2}\}$ and $r_{n-3}\notin \{p_{n-3},p_{n-2}\}$, then set
$y_{n-2}=y[n-2,n-1]$, $y_{n-2,1}=y_{n-2}[n-3,n-2]$, $\widehat{y}_{n-2}=y_{n-2,1}[n-2,n-1]$
and
$P_y=yy_{n-2}y_{n-2,1}\widehat{y}_{n-2}$.
(iii) Otherwise, set
$\widehat{y}_{n-2}=y[n-2,n-1]$ and $P_y=y\widehat{y}_{n-2}$.
As $x\neq y$, we have $V(P_x)\cap V(P_y)=\emptyset$.
Correspondingly to (i)--(iii), we have by Lemma \ref{ijout} that each of  $B_{n-1}^1[V_{p_{n-4}}^1\cup V_{r_{n-4}}^1]$,
 $B_{n-1}^1[V_{p_{n-4}}^1\cup V_{r_{n-3}}^1]$,
or
$B_{n-1}^1[V_{p_{n-4}}^1\cup V_{r_{n-2}}^1]$
is connected, so there is a $(\widehat{x}_{n-2},\widehat{y}_{n-2})$-path $P_{xy}$ in one of them.
Let
\[L_{n-2}=P_x+P_{xy}+P_y.
\]
Since $V(L_{n-2})\cap V_2^1=\{x,y\}$, we have $n-2$ internally disjoint $(x,y)$-paths in $B_{n-1}^1$.

\noindent
{\bf Case  2.2.1.} $x,y$ are not adjacent.

Let $\widehat{x}_i=x_i[n-2,n-1]$ for $i\in [n-3]$.
Then
$|\{\widehat{x}_i': i\in [n-3] \}\cap V_{p_{n-2}}^1|=n-4$ and $|\{\widehat{x}_i': i\in [n-3] \}\cap V_{p_{n-3}}^1|=1$.
Since $x_i\in N_{B_n}(x)$ for $1\le i\le n-3$, we have  $\widehat{x}_{i}\neq \widehat{x}_{j}$ if $i\neq j$.
Note that  $xy\notin E(B_n)$.
By comparing the position of `$2$' in the  permutation corresponding to  the vertices on $P_x$  and in $\widehat{x}_{i}$ for $i\in [n-3]$, we have
$V(P_x)\cap \{ \widehat{x}_{i}: i\in [n-3]\}=\emptyset$. Similarly, $V(P_y)\cap \{ \widehat{x}_{i}: i\in [n-3]\}=\emptyset$.

Let $X=\{\widehat{x}_i': i\in [n-2] \}$, and $Z=\{z_i': i\in [n-3]\}\cup \{z'\}$.
Note that $X\subseteq \cup_{i=3}^n V_i$ and $Z\subseteq \cup_{i=3}^n V_i$.
By Lemmas \ref{menger} and \ref{ijout}, there are $n-2$ disjoint $(X,Z)$-paths $R_1,\dots,R_{n-2}$ in $B_n[\cup_{i=3}^n V_i]$.
Assume that $z'\in V(R_{n-2})$, $z_i'\in V(R_i)$ for $i\in [n-3]$,  $\widehat{x}_{n-2}'\in V(R_s)$ for some $s\in [n-2]$, $\widehat{x}_i'\in V(R_i)$ for $i\in[n-3]\setminus\{s\}$ and $\widehat{x}_s'\in V(R_{n-2})$.
If $s=n-2$, let
\[
T_i=L_i+x_i\widehat{x}_i+\widehat{x}_i\widehat{x}_i'+R_i+z_i'z_i+Q_i \mbox{ for } i\in [n-3]
\]
and
\[
T_{n-2}=L_{n-2}+\widehat{x}_{n-2}\widehat{x}_{n-2}'+R_{n-2}+z'z+Q_{n-2}.
\]
Otherwise, let
\[
T_i=L_i+x_i\widehat{x}_i+\widehat{x}_i\widehat{x}_i'+R_i+z_i'z_i+Q_i \mbox{ for } i\in[n-3]\setminus\{s\},
\]
\[
T_s=L_{n-2}+\widehat{x}_{n-2}\widehat{x}'_{n-2}+R_s+z_s'z_s+Q_s
\]
and
\[
T_{n-2}=L_s+x_s\widehat{x}_s+\widehat{x}_s\widehat{x}_s'+R_{n-2}+z'z+Q_{n-2}.
\]
Then $T_1, \dots, T_{n-2}$  are  $n-2$ internally edge disjoint trees  connecting $S$ in $B_n$.

\noindent
{\bf Case  2.2.2.} $x,y$ are adjacent.

Assume that $L_1=xy$.
Let $\widehat{x}_i=x_i[n-2,n-1]$ for  $i=2,\dots,n-3$.
By similar argument as in  Case 2.2.1, we have $V(P_x)\cap \{ \widehat{x}_i: i=2,\dots,n-3\}=\emptyset$ and $V(P_y)\cap \{ \widehat{x}_i: i=2,\dots,n-3\}=\emptyset$.

Suppose that $N_{B_{n-1}^2}[x']\cap (\cup_{i=1}^{n-2} V(Q_i))=\emptyset$. Let $\widehat{x}_1=x'[n-2,n-1]$.
Let $X$ and $Z$ be defined the same as that in Case 2.2.1.
Then there are $n-2$ internally vertex disjoint $(X,Z)$-paths $R_i$ in $B_n[\cup_{i=3}^n V_i]$ for $i\in [n-2]$.
If $s\neq 1$, let $T_i$ be  defined as in Case 2.2.1 for $i=2,\dots,n-2$, and
let
\[T_1=xy+xx'+x'\widehat{x}_1+\widehat{x}_1\widehat{x}_1'+R_1+z_1'z_1+Q_1.
\]
Otherwise, let $T_i$ be defined as in Case 2.2.1 for $i=2,\dots,n-3$,
\[
T_1=L_{n-2}+\widehat{x}_{n-2}'\widehat{x}_{n-2}'+R_1+z_1'z_1+Q_1
\]
and
\[
T_{n-2}=xy+xx'+x'\widehat{x}_1+\widehat{x}_1\widehat{x}_1'+R_{n-2}+z'z+Q_{n-2}.
\]
In either case,  there are $n-2$ internally edge disjoint trees connecting $S$.

Otherwise,
assume that  $\widehat{x}_1\in N_{B_{n-1}^2}[x']\cap V(Q_\ell)$ for some $\ell\in [n-2]$.
So
\[
T_1=
\begin{cases}
Q_\ell+x'\widehat{x}_1+xx'+xy & \mbox{if $\widehat{x}_1\ne x'$}\\
Q_\ell+xx'+xy & \mbox{otherwise}
\end{cases}
\]
is a tree containing vertices in $S$.
Let $X=\{\widehat{x}_i': i=2,\dots,n-2 \}$, $Z=\{z_i': i\in [n-3]\}$ if $\ell=n-2$ and $Z=\{z_i': i\in[n-3]\setminus\{\ell\}\}\cup \{z'\}$ otherwise.
By Lemmas \ref{menger} and \ref{ijout}, there are $n-3$ internally vertex disjoint $(X,Z)$-paths $R_1,\dots,R_{n-3}$ in $B_n[\cup_{i=3}^n V_i]$.
Assume that $\widehat{x}_{i+1}',z_i'\in V(R_i)$ for $i\in [n-3]$ if $\ell=n-2$.
Let
\[
T_i=L_i+x_i\widehat{x}_i+\widehat{x}_i\widehat{x}_i'+R_{i-1}+z_{i-1}'z_{i-1}+Q_{i-1}\mbox{ for } i=2,\dots,n-3,
\]
and
\[
T_{n-2}=L_{n-2}+\widehat{x}_{n-2}\widehat{x}_{n-2}'+R_{n-3}+z_{n-3}'z_{n-3}+Q_{n-3}.
\]
Otherwise, we may suppose without loss of generality that $\ell=1$.
Assume that $z'\in V(R_{n-3})$, $z_i'\in V(R_{i-1})$ for $i=2,\dots,n-3$, $\widehat{x}_s'\in V(R_{n-3})$, $\widehat{x}_{n-2}'\in  V(R_{s-1})$ and $\widehat{x}_i'\in V(R_{i-1})$ for $i\in [n-3]\setminus\{1,s\}$.
If $s=n-2$, let
\[
T_i=L_i+x_i\widehat{x}_i+\widehat{x}_i\widehat{x}_i'+R_{i-1}+z_i'z_i+Q_i\mbox{ for } i=2,\dots,n-3,
\]
and
\[
T_{n-2}=L_{n-2}+\widehat{x}_{n-2}\widehat{x}_{n-2}'+R_{n-3}+z'z+Q_{n-2}.
\]
Otherwise, let
\[
T_i=L_i+x_i\widehat{x}_i+\widehat{x}_i\widehat{x}_i'+R_{i-1}+z_i'z_i+Q_i\mbox{ for } i\in[n-3]\setminus\{1,s\},
\]
\[
T_s=L_{n-2}+\widehat{x}_{n-2}\widehat{x}_{n-2}'+R_{s-1}+z_s'z_s+Q_s
\]
and
\[
T_{n-2}=L_s+x_s\widehat{x}_s+\widehat{x}_s\widehat{x}_s'+R_{n-3}+z'z+Q_{n-2}.
\]
Then  $T_1, \dots, T_{n-2}$ are $n-2$ internally edge disjoint trees connecting $S$ in $B_n$.

\noindent
{\bf Case  2.3.} Both $x',y'$ are in $B_{n-1}^2$ and $z',w'$ are in $B_{n-1}^1$.

If $n=4$, then $B_4[S]$ is a cycle of length four with edges $xy,zw,xz,yw$.
Let $x_1=x[2,3]$, $y_1=y[2,3]$ and $z_1=z[2,3]$.
Then $x_1',y_1',z_1'\in V_3\cup V_4$. So there is a tree $T_1'$ connecting $x_1',y_1',z_1'$.
Then
\[
T_1=xx_1+x_1x_1'+yy_1+y_1y_1'+T_1'+z_1'z_1+z_1z+zw
\]
and
\[
T_2=zx+xy+yw
\]
are two internally edge disjoint trees connecting $S$.

For $n\ge 5$, by the same way as in Case 2.2,  we may construct $n-2$ internally vertex disjoint $(x,y)$-paths
in $B_{n-1}^1$, and $n-2$ internally vertex disjoint $(z,w)$-paths
in $B_{n-1}^2$, and so we may obtain $n-2$ internally edge disjoint trees connecting $S$.

\subsection{Case 3}

Assume that $x,y\in V_1$, $z\in V_2$ and  $w\in V_3$.
Let \[x=(p_1,\dots,p_{n-1},1) \mbox{ and } y=(r_1,\dots,r_{n-1},1).\]
Then $x'\in V_{p_{n-1}}$ and $y'\in V_{r_{n-1}}$.
By considering whether the out-neighbors of $x$ and $y$ are in the same main part of $B_n$, we discuss the following two cases.

\noindent
{\bf Case  3.1.} $x'$ and $y'$ are in the different main parts, i.e., $p_{n-1}\neq r_{n-1}$.

Since $\kappa(B_{n-1}^1)=n-2$, there are $n-2$ internally vertex disjoint $(x,y)$-paths $L_1,\dots,L_{n-2}$ in $B_{n-1}^1$.
Let $\widehat{x}=x[n-2,n-1]$ and $\widehat{y}=y[n-2,n-1]$.
Note that each $L_i$ contains exactly one vertex in $N_{B_{n-1}^1}(x)$ and exactly one vertex in $N_{B_{n-1}^1}(y)$ for $i\in [n-2]$. Assume that $\widehat{x}\in V(L_{n-2})$ and $\widehat{y}\in V(L_s)$ for some $s\in [n-2]$.
Assume that $V(L_i)\cap N_{B_n}(x)=\{x_i\}$ for $i\in [n-3]$ and  $V(L_i)\cap N_{B_n}(y)=\{y_i\}$ for $i\in [n-2]\setminus \{s\}$.
Let $X=\{x_i': i\in [n-3] \}\cup\{x'\}$ and $Y=\{y_i': i\in[n-2]\setminus\{s\} \}\cup\{y'\}$.

Assume that $p_{n-1}\neq 3$ and $r_{n-1}\neq 2$, otherwise, we change the role of $x$ and $y$ in the following proof.
By Lemmas \ref{menger1} and \ref{ijout}, there are $n-2$ internally vertex disjoint $(z,X)$-paths $Q_1,\dots,Q_{n-2}$ in $B_n[V_2\cup V_{p_{n-1}}]$ and $n-2$ internally vertex disjoint $(w,Y)$-paths $R_1,\dots,R_{n-2}$ in $B_n[V_3\cup V_{r_{n-1}}]$.
Assume that $x'\in V(Q_{n-2})$, $x_i'\in V(Q_i)$ for $i\in [n-3]$, and $y'\in V(R_s)$ and $y_i'\in V(R_i)$ for $i\in[n-2]\setminus\{s\}$.
If $s=n-2$, let
\[
T_i=Q_i+x_i'x_i+L_i+y_iy_i'+R_i\mbox{ for } i\in [n-3],
\]
and
\[
T_{n-2}=Q_{n-2}+x'x+L_{n-2}+yy'+R_{n-2}.
\]
Otherwise, let
\[
T_i=Q_i+x_i'x_i+L_i+y_iy_i'+R_i\mbox{ for }  i\in[n-3]\setminus\{s\},
\]
\[
T_s=Q_s+x_s'x_s+L_s+y'y+R_s,
\]
and
\[
T_{n-2}=Q_{n-2}+xx'+L_{n-2}+y_{n-2}y_{n-2}'+R_{n-2}.
\]
Then $T_1,\dots, T_{n-2}$ are $n-2$ internally disjoint edge disjoint trees connecting $S$.

\noindent
{\bf Case  3.2.} $x'$ and $y'$ are in the same main part, i.e., $p_{n-1}= r_{n-1}$.

Assume that $p_{n-1}\neq 3$.
By similar argument as in Case 2.2, we obtain $n-2$ internally vertex disjoint $(x,y)$-paths $L_1,\dots,L_{n-2}$.
Let $x_i,\widehat{x}_i$ for $i\in [n-2]$ and $X$ be defined the same way as in Case 2.2.
Suppose that $V(L_i)\cap N_{B_n}(y)=\{y_i\}$ for $i\in [n-3]$.
Let $Y=\{y_i: i\in [n-3]\}\cup \{y'\}$.
By Lemmas \ref{menger1} and \ref{ijout},  there are $n-2$ internally disjoint $(w,X)$-paths $Q_1,\dots,Q_{n-2}$ in $B_n[V(B_n)\setminus(V_1\cup V_2\cup V_{p_{n-1}})]$ and there are $n-2$ internally disjoint $(z,Y)$-paths $R_1,\dots,R_{n-2}$ in $B_n[V_2\cup V_{p_{n-1}}]$.
Assume that $\widehat{x}_i'\in V(Q_i)$ for $i\in [n-2]$, $y_i'\in V(R_i)$ for $i\in [n-3]$ and $y'\in V(R_{n-2})$.
Let
\[
T_i=Q_i+\widehat{x}_i'\widehat{x}_i+x_i\widehat{x}_i+L_i+y_iy_i'+R_i\mbox{ for }i\in [n-3]
\]
and
\[
T_{n-2}=Q_{n-2}+\widehat{x}_{n-2}'\widehat{x}_{n-2}+L_{n-2}+yy'+R_{n-2}.
\]
Then there are $n-2$ internally edge disjoint trees $T_1,\dots,T_{n-2}$ connecting $S$.

\subsection{Case 4}

Assume that $x\in V_1$, $y\in V_2$, $z\in V_3$ and $w\in V_4$.
Suppose first that there are at least two vertices in $S$ whose out-neighbors  lie in $\cup_{i=5}^n V_i$, say $x',y'\in \cup_{i=5}^n V_i$.
By Lemma and \ref{ijout}, there are $n-2$ internally vertex disjoint $(x,z)$-paths $L_1,\dots,L_{n-2}$ in $B_n[V_1\cup V_3]$ and $n-2$ internally vertex disjoint $(y,w)$-paths $Q_1,\dots,Q_{n-2}$ in $B_n[V_2\cup V_4]$.
Then by similar argument as in  Case 2.1, we can obtain $n-2$ internally edge disjoint trees connecting $S$.

Suppose next that there is at most one vertex in $S$ whose unique out-neighbor lies in $\cup_{i=5}^n V_i$, that is, there are three vertices in $S$, say $x,y,z$, with $x',y',z'\in \cup_{i=1}^4V_i$.

Note that $x'\not\in  V_1$. Assume that $x'\in V_2$  (if $x'\in V_3$ or $x'\in V_4$, the argument is similar by viewing $z$ or $w$ as $y$).
We consider the following two cases.

\noindent
{\bf  Case 4.1.} $y'\in V_1$.

Recall that $z'\in V_1\cup V_2\cup V_4$.
Suppose first that $z'\in V_4$. By Lemma \ref{ijout}, there are $n-2$ internally vertex disjoint $(x,z)$-paths $L_1,\dots,L_{n-2}$ in $B_{n}[V_1\cup V_3]$.
Let $\widehat{x}=x[n-2,n-1]$ and $\widehat{z}=z[n-2,n-1]$.
Note that each $L_i$ contains exactly one vertex in $N_{B_{n-1}^1}(x)$. Assume that $\widehat{x}\in V(L_{n-2})$ and
$V(L_i)\cap N_{B_{n}}(x)=\{x_i\}$ for $i\in [n-3]$.
Similarly, we may assume that $\widehat{z}\in V(L_s)$ for some $s\in [n-2]$ and  $V(L_i)\cap N_{B_{n}}(z)=\{z_i\}$ for $i\in [n-2]\setminus\{s\}$.
Let $X=\{x_i': i\in [n-3]\}\cup \{x'\}$ and $Z=\{z_i': i\in [n-2]\setminus\{s\} \}\cup \{z'\}$.
Then $X\subseteq V_2$ with $|X|=n-2$ and $Z\subseteq V_4$ with $|Z|=n-2$.
By Lemmas \ref{menger1} and \ref{kappa}, there are $n-2$ internally vertex disjoint $(y,X)$-paths $Q_1,\dots, Q_{n-2}$ in $B_{n-1}^2$  and there are $n-2$ internally vertex disjoint $(w,Z)$-paths $R_1,\dots,R_{n-2}$ in $B_{n-1}^4$.
Assume that $x_i'\in V(Q_{i})$ for $i\in [n-3]$, $x'\in V(Q_{n-2})$ and $z'\in V(R_s)$, $z_i'\in V(R_i)$ for $i\in [n-2]\setminus\{s\}$.
If $s=n-2$, let
\[
T_i=L_i+x_ix_i'+Q_i+z_iz_i'+R_i \mbox{ for } i\in [n-3]
\]
and
\[
T_{n-2}=L_{n-2}+xx'+Q_{n-2}+zz'+R_{n-2}.
\]
Otherwise, let
\[
T_i=L_i+x_ix_i'+Q_i+z_iz_i'+R_i \mbox{ for } i\in [n-3]\setminus\{s\},
\]
\[
T_s=L_s+x_sx_s'+Q_s+zz'+R_s
\]
and
\[
T_{n-2}=L_{n-2}+xx'+Q_{n-2}+z_{n-2}z_{n-2}'+R_{n-2}.
\]
Then $T_1,\dots, T_{n-2}$ are $n-2$ internally edge disjoint trees  connecting $S$.
Next suppose $z'\in V_1\cup V_2$, say $z'\in V_1$.
There are $n-2$ internally vertex disjoint $(z,w)$-paths $L_1,\dots,L_{n-2}$ by Lemma \ref{ijout}.
Let $\widehat{z}=z[n-2,n-1]$.
Note that  each $L_i$ contains exactly one vertex in $N_{B_{n-1}^3}(z)$. Assume that $\widehat{z}\in V(L_{n-2})$ and $V(L_i)\cap N_{B_n}(z)=\{z_i\}$ for $i\in [n-3]$.
Let $Z=\{z_i': i\in [n-3]\}\cup \{z'\}$.
Then $Z\subseteq V_1$ with $|Z|=n-2$.
By Lemma \ref{kappa}, there are $n-2$ internally vertex disjoint $(x,Z)$-paths $Q_1,\dots,Q_{n-2}$.
Assume that $z_i'\in V(Q_i)$ for $i\in [n-3]$ and $z'\in V(Q_{n-2})$.
Let $\widehat{x}=x[n-2,n-1]$.
Note that  each $Q_i$ contains exactly one vertex in $N_{B_{n-1}^1}(x)$. Assume that $\widehat{x}\in V(Q_s)$ for some $s\in [n-2]$ and
$V(Q_i)\cap N_{B_{n-1}^1}(x)=\{y_i\}$ for $i\in [n-2]\setminus\{s\}$.
Let $X=\{x_i': i\in [n-2]\setminus\{s\}\}\cup \{x'\}$.
Then $X\subseteq V(B_{n-1}^2)$ with $|X|=n-2$.
There are $n-2$ internally vertex disjoint $(y,X)$-paths $R_1,\dots,R_{n-2}$ by Lemma \ref{kappa}.
Assume that $x_i'\in V(R_i)$ for $i\in [n-2]\setminus\{s\}$ and $x'\in V(R_s)$.
If $s=n-2$, let
\[
T_i=L_i+z_iz_i'+Q_i+x_ix_i'+R_i \mbox{ for }i\in [n-3]
\]
and
\[
T_{n-2}=L_{n-2}+zz'+Q_{n-2}+xx'+R_{n-2}.
\]
Otherwise, let
\[
T_i=L_i+z_iz_i'+Q_i+x_ix_i'+R_i \mbox{ for }i\in [n-3]\setminus\{s\},
\]
\[
T_s=L_s+z_sz_s'+Q_s+xx'+R_s
\]
and
\[
T_{n-2}=L_{n-2}+zz'+Q_{n-2}+x_{n-2}x_{n-2}'+R_{n-2}.
\]
Then there are $n-2$ internally edge disjoint trees $T_1,\dots,T_{n-2}$ connecting $S$.

\noindent
{\bf  Case 4.2.} $y'\notin V_1$.

Note that $y'\in V_3\cup V_4$. Assume that $y'\in V_3$.
By Lemma \ref{ijout}, there are $n-2$ internally vertex disjoint $(x,w)$-paths $L_1,\dots,L_{n-2}$ in $B_{n}[V_1\cup V_4]$.
Let $\widehat{x}=x[n-2,n-1]$.
Note that each $L_i$ contains exactly one vertex in $N_{B_{n-1}}(x)$. Assume that $\widehat{x}\in V(L_{n-2})$ and
$V(L_i)\cap N_{B_{n-1}^1}(x)=\{x_i\}$ for $i\in [n-3]$.
Let $X=\{x_i': i\in [n-3]\}\cup \{x'\}$.
Then $X\subseteq V_2$ with $|X|=n-2$.
By Lemmas \ref{menger1} and \ref{kappa}, there are $n-2$ internally vertex disjoint $(y,X)$-paths $Q_1,\dots, Q_{n-2}$ in $B_{n-1}^2$.
Assume that $x_i'\in V(Q_i)$ for $i\in [n-3]$ and $x'\in V(Q_{n-2})$.
Let $\widehat{y}=y[n-2,n-1]$.
Note that  each $Q_i$ contains exactly one vertex in $N_{B_{n-1}}(y)$. Assume that $\widehat{y}\in V(Q_{s})$ for some $s\in [n-2]$ and
$V(Q_i)\cap  N_{B_{n-1}}(y)=\{y_i\}$ for $i\in [n-2]\setminus\{s\}$.
Let $Y=\{y_i': i\in [n-2]\setminus\{s\}\}\cup \{y'\}$.
Then $Y\subseteq V_3$ with $|Y|=n-2$.
Since $\kappa(B_{n-1}^3)=n-2$, there are $n-2$ internally vertex disjoint $(z,Y)$-paths $R_1,\dots,R_{n-2}$ in $B_{n-1}^3$.
Assume that $y_i'\in V(R_i)$ for $i\in [n-2]\setminus\{s\}$ and $y'\in V(R_s)$.
If $s=n-2$, let
\[
T_i=L_i+x_ix_i'+Q_i+y_iy_i'+R_i\mbox{ for }i\in [n-3],
\]
and
\[
T_{n-2}=L_{n-2}+xx'+Q_{n-2}+yy'+R_{n-2}.
\]
Otherwise, let
\[
T_i=L_i+x_ix_i'+Q_i+y_iy_i'+R_i \mbox{ for } i\in [n-3]\setminus\{s\},
\]
\[
T_{s}=L_{s}+x_sx_s'+Q_{s}+yy'+R_{s}
\]
and
\[
T_{n-2}=L_{n-2}+xx'+Q_{n-2}+y_{n-2}y_{n-2}'+R_{n-2}.
\]
Then there are $n-2$ internally edge disjoint trees $T_1,\dots, T_{n-2}$ connecting $S$.

\subsection{Case 5}

Assume that $x,y,z\in V_1$ and  $w\in V_2$.

Suppose first that $n=4$.
Note that $B_3$ is a cycle of length $6$.
Let $P_{xy}$, $P_{xz}$, and $P_{yz}$ be the $(x,y)$-path, $(x,z)$-path and $(y,z)$-path in $B_3^1$ with $z\notin V(P_{xy})$, $y\notin V(P_{xz})$ and $x\notin V(P_{yz})$, respectively.
Suppose that $w'\in V_1$. If $w'\notin \{x,y,z\}$, then there is a spanning tree $T_1$  in $B_3^1$ and a spanning tree $T_2$ in $B_4[V(B_4)\setminus V_1]$, so
$T_1^*=T_1+w'w$ and
$T_2^*=T_2+x'x+y'y+z'z$ are two internally edge disjoint trees connecting $S$.
If $w'\in \{x,y,z \}$, say $w'=x$, then there is a spanning tree $T$ in $B_4[V(B_4)\setminus V_1]$,  so $T_1=wx+P_{xy}+P_{yz}$ and $T_2=P_{xz}+zz'+T+y'y$ are two internally edge disjoint trees connecting $S$.
Next suppose that $w'\notin V_1$.
Note that one of $x',y',z'$, say $x'$, lies outside $B_3^2$. Then $x'\in V_3\cup V_4$. Assume that $x'\in V_3$.
Let $x_1=x[1,2]$ and assume that $x_1\in V(P_{xy})$.

If $z'\in V_2$, then we choose a vertex $w_1$ in $B_3^2$ different from $w,z'$ such that $w_1'\in V_3$.
By Lemmas \ref{menger1} and \ref{kappa}, there are two $(w,\{w_1,z' \})$-paths $L_1$ and $L_2$.
Assume that $w_1\in V(L_1)$ and $z'\in V(L_2)$.
Since $w,w_1'\in V_3\cup V_4$, there are two $(\{w',w_1 '\},\{ x',x_1'\})$-paths $Q_1$ and $Q_2$ in $B_4[V_1\cup V_2]$ by Lemmas \ref{menger} and \ref{ijout}.
Assume that $w_1'\in V(Q_1)$.
If $x'\in V(Q_1)$, let $T_1=P_{yz}+P_{xz}+xx'+Q_1+w_1'w_1+L_1$ and $T_2=P_{xy}+x_1x_1'+Q_2+w'w+L_2+z'z$.
If $x'\in V(Q_2)$, let $T_1=P_{yz}+P_{xz}+xx'+Q_2+w'w$ and $T_2=P_{xy}+x_1x_1'+Q_1+w_1'w_1+L_1+L_2+z'z$.
Then $T_1$ and $T_2$ are two internally edge disjoint trees connecting $S$.

If $z'\in V_3\cup V_4$, say $z'\in V_4$. If $w'\in V_4$, let $w_1$ and $w_2$ be two vertices in $B_3^2$ with $w_1', w_2'\in V_3$, and there are two internally vertex disjoint $(w,w_i)$-path $L_i$ for $i=1,2$ in $B_3^2$ by Lemma \ref{kappa}.
Similarly, there are two internally vertex disjoint $(\{x',x_1'\},\{w_1',w_2'\} )$-paths $Q_1$ and $Q_2$ in $B_3^3$ and one $(w',z')$-path $K$ in $B_3^4$.
Then $T_1=P_{yz}+P_{xz}+xx'+Q_1+w_1'w_1+L_1$ and $T_2=P_{xy}+x_1x_1'+Q_2+w_2'w_2+L_2+ww'+K+z'z$
are two internally edge disjoint trees connecting $S$.
Otherwise, $w'\in V_3$.
Let $w_1$ and $w_2$ be two vertices in $B_3^2$ with $w_1'\in V_3$ and $w_2'\in V_4$.
By similar argument above, we may obtain two internally edge disjoint trees connecting $S$.

Now suppose that $n\ge 5$.
Let
\[x=(p_1,\dots, p_{n-1},1),y=(q_1,\dots, q_{n-1},1),z=(r_1,\dots, r_{n-1},1).
\]
Then $x\in V_{p_{n-1}}^1$, $y\in V_{q_{n-1}}^1$ and $z\in V_{r_{n-1}}^1$.

\noindent
{\bf Case  5.1.} $x',y'$ and $z'$ lie in three different main parts.

Let $x_i=x[i,i+1]$ for $i\in [n-2]$.
Then $x_1,\dots,x_{n-3}\in V_{p_{n-1}}^1$ and $x_{n-2}\in V_{p_{n-2}}^1$.
Since $q_{n-1}\neq r_{n-1}$, we may assume that $x_{n-2}\notin V_{q_{n-1}}^1$.
By Lemma \ref{ijout}, there are $n-3$ internally vertex disjoint $(x,y)$-paths $L_1,\dots,L_{n-3}$ in $B_{n-1}^1$.
Assume that $x_i\in V(L_i)$ for $i\in [n-3]$.
Let $\widehat{x}_i=x_i[n-2,n-1]$ for $i\in [n-4]$ and let $Z=\{\widehat{x}_i: i\in [n-4]\}\cup  \{x_{n-2}\}$.
We have $Z\subseteq V_1\setminus (V_{p_{n-1}}^1\cup V_{q_{n-1}}^1)$. As $\kappa(B_{n-1}^1[V_1\setminus (V_{p_{n-1}}^1\cup V_{q_{n-1}}^1)])=n-3$,
there are $n-3$ internally vertex disjoint $(z,Z)$-paths $Q_1,\dots,Q_{n-3}$.
Assume that $\widehat{x}_i\in V(Q_i)$ for $i\in [n-4]$ and $x_{n-2}\in V(Q_{n-3})$.
Let $F=\{x_i': i\in [n-3]\}\cup \{x',y',z'\}$ and $F_1=F\cap V_2$.

\noindent
{\bf Case  5.1.1.} $F_1=\emptyset$.

There are  three possibilities:
(i) $w'\notin V_1\cup V_{p_{n-1}}$, (ii) $w'\in V_{p_{n-1}}$ and (iii) $w'\in V_1$.

For (i), choose $n-2$ vertices $w_1,\dots, w_{n-2}\in V_2$ with out-neighbors in $V_{p_{n-1}}$.
Then there are $n-2$ internally vertex disjoint $(w,w_i)$-paths $H_i$ for $i\in [n-2]$ in $B_{n-1}^2$.
Let $X=\{x_i': i\in [n-3] \}\cup \{x'\}$ and  $W=\{w_i': i\in [n-2]\}$.
Then $X,W\subseteq V_{p_{n-1}}$ with $|X|=|W|=n-2$.
By Lemma \ref{menger}, there are $n-2$ internally vertex disjoint $(X,W)$-paths $R_1,\dots, R_{n-2}$ in $B_{n-1}^{p_{n-1}}$.
Assume that $x_i', w_i'\in V(R_i)$ for $i\in [n-3]$ and $x',w_{n-2}\in V(R_{n-2})$.
Since $y',z',w'\notin V_1\cup V_{p_{n-1}}$ and $B_n[V(B_{n})\setminus (V_1\cup V_{p_{n-1}})]$
is connected, there is a tree $T$ containing $y',z',w'$.
Let
\[
T_i=H_i+w_iw_i'+R_i+x_i'x_i+L_i+x_i\widehat{x}_i+Q_i\mbox{ for }i\in [n-4],
\]
\[
T_{n-3}=H_{n-3}+w_{n-3}w_{n-3}'+R_{n-3}+x_{n-3}'x_{n-3}+L_{n-3}+xx_{n-2}+Q_{n-3},
\]
and
\[
T_{n-2}=xx'+R_{n-2}+w_{n-2}'w_{n-2}+H_{n-2}+ww'+T+y'y+z'z
\]
Then there are $n-2$ internally edge disjoint trees $T_1,\dots,T_{n-2}$ connecting $S$.

For (ii), let $w_1,\dots, w_{n-3}$ be $n-3$ vertices in $B_{n-1}^2$ with out-neighbors in $B_{n-1}^{p_{n-1}}$ and $w_{n-2}\in V_2$ be one vertex with out-neighbor in $B_{n}[V(B_n)\setminus (V_1\cup V_{p_{n-1}})]$. By Lemmas \ref{menger1} and \ref{kappa},
there are $n-2$ internally vertex disjoint $(w,w_i)$-paths $H_i$ for $i\in [n-2]$ in $B_{n-1}^2$.
Let $X=\{x_i': i\in [n-3] \}\cup \{x'\}$ and  $W=\{w_i': i\in [n-3]\}\cup \{w'\}$.
Then $X,W\subseteq V_{p_{n-1}}$ with $|X|=|W|=n-2$.
By Lemma \ref{menger}, there are $n-2$ internally vertex disjoint $(X,W)$-paths $R_1,\dots, R_{n-2}$ in $B_{n-1}^{p_{n-1}}$.
Assume that  $x_i'\in V(R_i)$ for $i\in [n-3]$, $x'\in V(R_{n-2})$,  $w'\in V(R_s)$ for some $s\in [n-2]$, $w_i'\in V(R_i)$ for $i\in [n-3]\setminus\{s\}$ and $w_s'\in V(Q_{n-2})$.
Since $y',z',w_{n-2}'\notin V_1\cup V_{p_{n-1}}$, there is a tree $T$ with $y',z',w_{n-2}'\in V(T)$ in $B_n[V(B_n)\setminus(V_1\cup V_{p_{n-1}})]$.
If $s=n-2$, let
\[
T_i=H_i+w_iw_i'+R_i+x_i'x_i+L_i+x_i\widehat{x}_i+Q_i\mbox{ for } i\in [n-4],
\]
\[
T_{n-3}=H_{n-3}+w_{n-3}w_{n-3}'+R_{n-3}+x_{n-3}'x_{n-3}+L_{n-3}+xx_{n-2}+Q_{n-3},
\]
and
\[
T_{n-2}=xx'+R_{n-2}+w'w+H_{n-2}+w_{n-2}w_{n-2}'+T+y'y+z'z.
\]
If $s=n-3$, let
\[
T_i=H_i+w_iw_i'+R_i+x_i'x_i+L_i+x_i\widehat{x}_i+Q_i\mbox{ for } i\in [n-4],
\]
\[
T_{n-3}=ww'+R_{n-3}+x_{n-3}'x_{n-3}+L_{n-3}+xx_{n-2}+Q_{n-3}
\]
and
\[
T_{n-2}=xx'+R_{n-2}+w_{n-3}'w_{n-3}+H_{n-3}+H_{n-2}+w_{n-2}w_{n-2}'+T+y'y+z'z.
\]
Otherwise, let
\[
T_i=H_i+w_iw_i'+R_i+x_i'x_i+L_i+x_i\widehat{x}_i+Q_i\mbox{ for } i\in [n-4]\setminus\{s\},
\]
\[
T_s=ww'+R_s+x_s'x_s+L_s+x_s\widehat{x}_s+Q_s,
\]
\[
T_{n-3}=H_{n-3}+w_{n-3}w_{n-3}'+R_{n-3}+x_{n-3}'x_{n-3}+L_{n-3}+xx_{n-2}+Q_{n-3}
\]
and
\[
T_{n-2}=xx'+R_{n-2}+w_{s}'w_{s}+H_{s}+H_{n-2}+w_{n-2}w_{n-2}'+T+y'y+z'z.
\]
Then $T_1,\dots,T_{n-2}$ are $n-2$ internally edge disjoint trees connecting $S$.

Now we consider (iii). Suppose  that $N_{B_{n-1}^1}[w']\cap \cup_{i=1}^{n-3}(V(L_i)\cup V(Q_i))=\emptyset$. Let $\widehat{w}=w'[n-2,n-1]$.
If $\widehat{w}'\notin V_{p_{n-1}}$ ($\widehat{w}'\in V_{p_{n-1}}$, respectively),
then we use  $\widehat{w}'$ for $w'$ in the above argument in (i)  ((ii), respectively).
So we obtain $n-2$ internally edge disjoint trees connecting $S$.
Otherwise, assume that $\widetilde{w}\in N_{B_{n-1}^1}[w']\cap \cup_{i=1}^{n-3}(V(L_i)\cup V(Q_i))$.
Since $F_1=\emptyset$, $\widetilde{w}\in V(Q_s)$ for some $s\in [n-3]$.
Let $w_1,\dots,w_{n-3}$ be $n-3$ vertices in $B_{n-1}^2$ with out-neighbors in $B_{n-1}^{p_{n-1}}$ and $w_{n-2}$ be a vertex in $B_{n-1}^2$ with out-neighbor in $B_n[V(B_n)\setminus(V_1\cup V_{p_{n-1}})]$.
By Lemma \ref{ijout}, there are $n-2$ internally vertex disjoint $(w,w_i)$-paths $H_i$ for $i\in [n-2]$.
Let $X=\{x_i': i\in [n-3]\setminus\{s\}\}\cup \{x'\}$ and $W=\{w_i': i\in [n-3]\}$.
Then $X,W\subseteq V_{p_{n-1}}$ with $|X|=|W|=n-3$.
By Lemmas \ref{menger} and \ref{kappa}, there are $n-3$ internally vertex disjoint $(X,W)$-paths $R_1,\dots,R_{n-3}$ in $B_{n-1}^{p_{n-1}}$.
Assume that $x_i',w_i'\in V(R_i)$ for $i\in [n-3]\setminus\{s\}$ and $x',w_s'\in V(R_s)$.
Since $w_{n-2}',y',z'\notin V_1\cup V_{p_{n-1}}$, there is a spanning tree $T$ in $B_n[V(B_n)\setminus (V_1\cup V_{p_{n-1}})]$  with $w_{n-2}',y',z'\in V(T) $ by Lemma \ref{ijout}.
If $s=n-3$, let
\[
T_i=H_i+w_iw_i'+R_i+x_i'x_i+L_i+x_i\widehat{x}_i+Q_i\mbox{ for } i\in [n-4],
\]
\[
T_{n-3}=
\begin{cases}
L_{n-3}+xx_{n-2}+Q_{n-3}+w'w & \mbox{ if }\widetilde{w}=w'\\
L_{n-3}+xx_{n-2}+Q_{n-3}+\widetilde{w}w'+w'w&\mbox{ otherwise}
\end{cases}
\]
and
\[
T_{n-2}=xx'+R_{n-3}+w_{n-3}'w_{n-3}+H_{n-3}+H_{n-2}+w_{n-2}w_{n-2}'+T+y'y+z'z.
\]
Otherwise, let
\[
T_i=H_i+w_iw_i'+R_i+x_i'x_i+L_i+x_i\widehat{x}_i+Q_i\mbox{ for } i\in [n-4]\setminus\{s\},
\]
\[
T_s=
\begin{cases}
	L_s+x_s\widehat{x}_s+Q_s+w'w,&\mbox{ if }\widetilde{w}=w',\\
	L_s+x_s\widehat{x}_s+Q_s+\widetilde{w}w'+w'w&\mbox{ otherwise},
\end{cases}
\]
\[
T_{n-3}=H_{n-3}+w_{n-3}w_{n-3}'+R_{n-3}+x_{n-3}'x_{n-3}+L_{n-3}+xx_{n-2}+Q_{n-3},
\]
and
\[
T_{n-2}=xx'+R_{n-3}+w_{s}'w_{s}+H_{s}+H_{n-2}+w_{n-2}w_{n-2}'+T+y'y+z'z.
\]
Then $T_1,\dots,T_{n-2}$ are $n-2$ internally edge disjoint trees connecting $S$.

\noindent
{\bf Case  5.1.2.} $F_1=\{x_i': i\in[n-3]\}\cup \{x'\}$.

Suppose that $w'\notin V_1$. By Lemmas \ref{menger1} and \ref{kappa},
 there are $n-2$ internally vertex disjoint $(w,F_1)$-paths $H_1,\dots,H_{n-2}$ in $B_{n-1}^2$.
Assume that $x_i'\in V(H_i)$ for $i\in [n-3]$ and $x'\in V(H_{n-2})$.
Since $y',z',w'\in V(B_n)\setminus (V_1\cup V_2)$, there is a spanning  tree $T$ in $B_n[V(B_n)\setminus (V_1\cup V_2)]$ with $ y',z',w'\in V(T)$.
Let
\[
T_i=H_i+x_i'x_i+L_i+x_i\widehat{x}_i+Q_i\mbox{ for }i\in [n-4],
\]
\[
T_{n-3}=H_{n-3}+x_{n-3}'x_{n-3}+L_{n-3}+xx_{n-2}+Q_{n-3},
\]
and
\[
T_{n-2}=T+yy'+zz'+w'w+H_{n-2}+x'x.
\]
Then there are $n-2$  internally edge  disjoint trees $T_1,\dots,T_{n-2}$ connecting $S$.

Suppose that $w'\in V_1$.
If $N_{B_{n-1}^1}[w']\cap \cup_{i=1}^{n-3}(V(L_i)\cup V(Q_i))=\emptyset$, then
we may consider  $\widehat{w}'$ as $w'$ in the argument above with  $\widehat{w}=w'[n-2,n-1]$, and hence obtain $n-2$ internally edge disjoint  trees connecting $S$.
Otherwise,  some vertex in $N_{B_{n-1}^1}[w']$  lies on some path $L_i$ or $Q_i$, so the argument is similar to that in Case 5.1.1.

\noindent
{\bf Case  5.1.3.} $F_1=\{y'\}$ or $F_1=\{z'\}$, say $F_1=\{y'\}$.

Let $y_i=y[i,i+1]$ for $i\in [n-2]$.
Note $p_{n-1}\neq r_{n-1}$. Assume that $y_{n-2}\notin V_{p_{n-1}}^1$.
By Lemma \ref{ijout}, there are $n-3$ internally vertex disjoint $(x,y)$-paths $L_1,\dots,L_{n-3}$ in $B_{n-1}^1[V_{p_{n-1}}^1\cup V_{q_{n-1}}^1]$.
Assume that $y_i\in V(L_i)$ for $i\in [n-3]$.
Let $\widehat{y}_i=y_i[n-2,n-1]$ for $i\in [n-4]$ and let $Z=\{\widehat{y}_i:i\in [n-4] \}\cup \{y_{n-2}\}$.
Then $Z\subseteq V_1 \setminus (V_{p_{n-1}}^1\cup V_{q_{n-1}}^1)$.
Since $\kappa(B_{n-1}^1[V_1\setminus (V_{p_{n-1}}^1\cup V_{q_{n-1}}^1)])=n-3$, there are $n-3$ internally vertex disjoint $(z,Z)$-paths $Q_1,\dots,Q_{n-3}$.
Assume that $\widehat{y}_i\in V(Q_i)$ for $i\in [n-4]$ and $y_{n-2}\in V(Q_{n-3})$.
Let $F_2=(\{y_i':i\in[n-3]\}\cup \{x',y',z'\})\cap V_2$.
Recall that $y'\in V_2$, then $F_2=\{y_i':i\in[n-3]\}\cup \{y'\}$. Now by considering whether $w'$ is in $V_1$ and similar argument as in
  Case 5.1.2, there are $n-2$ internally edge disjoint trees connecting $S$.

\noindent
{\bf Case  5.2.} $x',y'$ and $z'$ lie in two different main parts.

Assume that $x'$ lies in different main part from $y'$ and $z'$. For $j\in [n]\setminus\{2\}$,

Since $\kappa(B_{n-2}^{(1,j)})=n-3$, there are $n-3$ internally vertex disjoint $(y,z)$-paths $L_1,\dots,L_{n-3}$ in $B_{n-2}^{(1,q_{n-1})}$.
Assume that $y_i\in V(L_i)$ and let $\widehat{y}_i=y_i[n-2,n-1]$ for $i\in [n-3]$.
Then $\widehat{y}_i\in( V_{q_{n-2}}^1\cup V_{q_{n-3}}^1)\subseteq V_1$.
Since $x\in V_{p_{n-1}}^1\subseteq V_1$, there are $n-3$ internally vertex disjoint $(x,\widehat{y}_i)$-paths $Q_i$ in $B_{n-1}^1[V_{p_{n-1}}\cup V_{q_{n-2}}\cup  V_{q_{n-3}}^1]$ for $i\in [n-3]$ by Lemma \ref{ijout}.
Let $x_i=x[i,i+1]$ for $i\in [n-3]$. Assume that $x_i\in V(Q_i)$ with  $i\in [n-3]$.
Let $F=\{x_i': i\in[n-3]\}\cup \{x',y',z'\}$ and $F_1=F\cap V_2$.
There are three possibilities:
(i) $F_1=\emptyset$,
(ii) $F_1=\{x_i': i\in [n-3]\}\cup \{x'\}$ and
(iii) $F_1=\{y',z'\}$.
The argument for  (i) and (ii) is similar as in  Case 5.1.1 and Case 5.1.2, respectively. So we only  consider (iii).
Suppose that $w\in V_\ell^2$ with $\ell\ne 2$.

\noindent
{\bf Case 5.2.1.} $w'\notin V_1$.

We choose $n-3$ vertices $w_1,\dots,w_{n-3}\in V_\ell^2$, then there are $n-3$ internally vertex disjoint $(w,w_i)$-paths $H_i$ for $i\in [n-3]$ in $B_{n-2}^{(2,\ell)}$.
Let $\widehat{w}=w[n-2,n-1]$. As $\widehat{w}, y',z'\in V_2\setminus V_\ell^2$, there is a tree $T_{n-2}^*$ containing $\widehat{w}, y',z'$ in $B_{n-2}^{(2,\ell)}$ by Lemma \ref{ijout}.
Let $Y=\{\widehat{y}_i': i\in [n-3]\}\cup \{x'\}$ and $W=\{w_i': i\in [n-3]\}\cup \{w'\}$.
Then $Y,W\subseteq V(B_n)\setminus(V_1\cup V_2)$, and so there are $n-2$ internally vertex disjoint $(Y,W)$-paths $R_1,\dots,R_{n-2}$ by Lemma \ref{ijout}.
Assume that $\widehat{y}_i'\in V(R_i)$ for $i\in [n-3]$, $x'\in V(R_{n-2})$, $w'\in V(R_s)$ for some $s\in [n-2]$, $w_i'\in V(R_i)$ for $i\in[n-3]\setminus\{s\}$ and $w_s'\in V(R_{n-2})$.
If $s=n-2$, let
\[
T_i=L_i+y_i\widehat{y}_i+Q_i+\widehat{y}_i\widehat{y}_i'+R_i+w_i'w_i+H_i\mbox{ for } i\in [n-3]
\]
and
\[
T_{n-2}=xx'+R_{n-2}+w'w+w\widehat{w}+T_{n-2}^*.
\]
Otherwise, let
\[
T_i=L_i+y_i\widehat{y}_i+Q_i+\widehat{y}_i\widehat{y}_i'+R_i+w_i'w_i+H_i\mbox{ for } i\in[n-3]\setminus\{s\},
\]
\[
T_s=L_s+y_s\widehat{y}_s+Q_s+\widehat{y}_s\widehat{y}_s'+R_s+ww'
\]
and
\[
T_{n-2}=xx'+R_{n-2}+w_s'w_s+H_s+w\widehat{w}+T_{n-2}^*.
\]
Hence, we obtain $n-2$ internally edge disjoint trees connecting $S$.

\noindent
{\bf Case 5.2.2.} $w'\in V_1$.

If $N_{B_{n-1}^1}[w']\cap \cup_{i=1}^{n-3}(V(L_i)\cup V(Q_i))=\emptyset$, the result follows by considering  $\widehat{w}'$ for  $w'$   in the above proof  with $\widehat{w}=w'[n-2,n-1]$.

Suppose that $N_{B_{n-1}^1}[w']\cap \cup_{i=1}^{n-3}(V(L_i)\cup V(Q_i))\not=\emptyset$.
Let $y_i=y[i,i+1]$, $w_i=w[i,i+1]$ and assume that $y_i\in V(L_i)$, $\widehat{y}_i\in V(Q_i)$ for $i\in [n-3]$.
Let $\widehat{x}=x[n-2,n-1]$, $\widehat{z}=z'[n-2,n-1]$, $\widehat{w}=w[n-2,n-1]$ and $\widehat{w}_i=w_i[n-2,n-1]$ for $i\in [n-3]$.

Suppose that $y$ or $z$, say $y$, is adjacent to $w$. Then $y_i'=w_i$ for $i\in [n-3]$.
If $z$ is not adjacent to $y$, then, since $\kappa(B_{n-1}^2)=n-2$, there is a $(z',\widehat{w})$-path $R$ in $B_{n-1}^2[V_2\setminus \{w_i: i\in [n-3]\}]$.
Noting that $\widehat{w}',x'\in V(B_n)\setminus(V_1\cup V_2)$, there is an $(x',\widehat{w}')$-path $K$.
Let
\[
T_i=ww_i+w_i'y_i+L_i+y_i\widehat{y}_i+Q_i\mbox{ for } \in [n-3]
\]
and
\[
T_{n-2}=xx'+K+\widehat{w}'\widehat{w}+R+z'z+\widehat{w}w+wy.
\]
Then we obtain $n-2$ internally edge disjoint trees $T_1, \dots, T_{n-2}$ connecting $S$.
Suppose that $z$ is adjacent to $y$, say $z=y_{\xi}$ for some $\xi \in [n-2]$. Then $z'=w_\xi$.
Let
\[
T_i=ww_i+w_iy_i+L_i+y_i\widehat{y}_i+Q_i\mbox{ for } i\in[n-3]\setminus\{\xi\}.
\]
and $\widehat{y}=y[n-2,n-1]$.
We consider  $\widehat{y}\neq x$ and  $\widehat{y}= x$ separately.
Suppose that $\widehat{y}\neq x$.
Note that $x',\widehat{y}',\widehat{w}_\xi',\widehat{w}'\notin V_1\cup V_2$, there are two $(\{x',\widehat{y}'\},\{\widehat{w}_\xi',\widehat{w}'\})$-paths $R_1$ and $R_2$ by Lemma \ref{ijout}.
If $x'$ and $\widehat{w}_\xi'$ are in the same path, say $R_1$, let
\[
T_\xi=xx'+R_1+\widehat{w}_\xi'\widehat{w}_\xi+\widehat{w}_\xi w_\xi+w_\xi w+ wy,
\]
and
\[
T_{n-2}=Q_\xi+\widehat{y}y+yz+\widehat{y}\widehat{y}'+R_2+\widehat{w}'\widehat{w}+\widehat{w}w.
\]
Otherwise, assume that $x'$ is in $R_1$, let
\[
T_\xi=xx'+R_1+\widehat{w}'\widehat{w}+\widehat{w}w+wy+yz
\]
and
\[
T_{n-2}=Q_\xi+\widehat{y}y+\widehat{y}\widehat{y}'+R_2+\widehat{w}_\xi'\widehat{w}_\xi+\widehat{w}_\xi w_\xi+w_\xi w+w_\xi z.
\]
So we obtain $n-2$ internally edge disjoint trees $T_1, \dots, T_{n-2}$ connecting $S$.
Now suppose that $\widehat{y}=x$.
Then
\[
x=(q_1,\dots,q_{n-3},2,q_{n-2},1) \mbox{ and }  x'=(q_1,\dots,q_{n-3},2,1,q_{n-2}).
\]
Recall that $w=y'$.
Then
\[
w=(q_1,\dots,q_{n-3},q_{n-2},1,2), \widehat{w}=(q_1,\dots,q_{n-3},1,q_{n-2},2)
\]
and
\[
\widehat{w}'=(q_1,\dots,q_{n-3},1,2,q_{n-2}).
\]
It can be seen that $x'$ is adjacent to $\widehat{w}'$.
Let
\[
T_\xi=yw+wz'+z'z+z\widehat{y}_\xi+Q_\xi
\]
and
\[
T_{n-2}=zy+yx+xx'+x'\widehat{w}'+\widehat{w}'\widehat{w}+\widehat{w}w.
\]
So there are $n-2$ internally edge disjoint trees $T_1,\dots,T_{n-2}$ connecting $S$.

Next suppose that $y$ and $z$ are not adjacent to $w$.
Suppose that $y'$ or $z'$, say $y'$, is adjacent to $w$. Then $y_\xi=w'$ for some $\xi\in [n-3]$. So
\[
T_\xi=wy_\xi +L_\xi+y_\xi \widehat{y}_\xi+Q_\xi
\]
is a tree containing the vertices in $S$.
Let $W$ be the set of $n-4$ neighbors of $w$ such that they are not adjacent to $y$ or $z$ and $Y=\{\widehat{y}_i': i\in [n-3]\setminus\{\xi\}\}$.
Similarly to the above argument, we may obtain $n-4$ internally vertex disjoint $(Y,W)$-paths and hence  $n-4$ internally edge disjoint trees $T_i$ for $i\in [n-3]\setminus\{\xi\}$ connecting $S$.
Since $\kappa(B_{n-2}^{(2,1)})=n-3$, there is a tree $H$ containing $w,y',z'$.
Let $v_1=y'[n-2,n-1]$, $v_2=v_1[n-3,n-2]$,  $v_3=v_2[n-4,n-3]$,
$v_4=v_3[n-3,n-2]$, $v_5=v_4[n-2,n-1]$
and $P_y=wy'v_1v_2v_3v_4v_5$.
Note that there is an $(x',v_5')$-path $L_{n-2}$ with  $V(L_{n-2})\cap V(T_i)=\emptyset$ for $i\in [n-3]$.
Let
\[
T_{n-2}=H+P_y+v_5v_5'+L_{n-2}+x'x.
\]
So there are $n-2$ internally edge disjoint trees $T_1,\dots,T_{n-2}$ connecting $S$.
Suppose that $y',z'\notin\{w_i: i\in [n-3]\}\cup \{w\}$.
Let $Y=\{\widehat{y}_i': i\in [n-3]\}$ and $W=\{\widehat{w}_i': i\in [n-4]\}\cup \{w_{n-2}'\}$.
By Lemmas \ref{menger} and \ref{ijout}, there are $n-3$ internally vertex disjoint $(Y,W)$-paths in $B_n[V(B_n)\setminus (V_1\cup V_2)]$. Hence  we may obtain $n-3$ internally edge disjoint trees $T_i$ for $i\in [n-3]$ by similar argument as in Case 5.2.1.
Since $\kappa(B_{n-2}^{(2,1)})=n-3$, there is a tree $H$ in $B_{n-2}^{(2,1)}$ containing vertices $y',z',w$ with $V(H)\cap \{w_i: i\in[n-4]\}=\emptyset$.
By Lemma \ref{ijout}, there is an $(x',\widehat{w}_{n-3}')$-path $L_{n-2}$ such that it is disjoint with the above $n-3$ $(Y,W)$-paths.
Let
\[
T_{n-2}=yy'+zz'+H+w_{n-3}\widehat{w}_{n-3}+\widehat{w}_{n-3}\widehat{w}_{n-3}'+L_{n-2}+x'x.\]
Then there are $n-2$ internally edge disjoint trees $T_1,\dots,T_{n-2}$ connecting $S$.

\noindent
{\bf Case  5.3.} $x',y',z'$ lie in the same main part, that is, $p_{n-1}=q_{n-1}=r_{n-1}$.

\noindent
{\bf Case  5.3.1.}  There is at least one of $x,y,z$, say $x$, that is not adjacent to the others.

Since $x,y,z\in V_{p_{n-1}}^1$, there are $n-4$ internally edge disjoint trees $T_1,\dots,T_{n-4}$ connecting $\{x,y,z\}$ in $B_{n-2}^{(1,p_{n-1})}$ by Lemma \ref{kappa3}.
Note that each $T_i$ contains at least one vertex in $N_{B_{n-2}^{(1,p_{n-1})}}(x)$, say $x_i$, for $i\in [n-4]$.
Assume that $x_1=x[n-3,n-2]$.
Let $\widehat{x}_i=x_i[n-2,n-1]$ for $i\in [n-4]$,
$\widehat{x}=x[n-2,n-1]$,  $\widehat{y}=y[n-2,n-1]$ and $\widehat{z}=z[n-2,n-1]$.
Note that $\widehat{x},\widehat{y},\widehat{z}\notin V_{p_{n-1}}^1$ and $\kappa(B_{n-2})=n-3$, there is a tree $T_{n-3}$ not in $B_{n-2}^{(1,p_{n-1})}$ containing  $\widehat{x},\widehat{y},\widehat{z}$ with $V(T_{n-3})\cap\{\widehat{x}_i: i=1,\dots,n-4\}=\emptyset$.
Assume that $x_1=x[n-3,n-2]$.

Let $F=\{\widehat{x}_i': i=1,\dots,n-4\}\cup\{\widehat{x}',x',y',z'\}$ and $F_1=F\cap V_2$.
Note that $\widehat{x}_1'\in V_{p_{n-3}}$,
$\widehat{x}_i'\in V_{p_{n-2}}$ for $i=2,\dots,n-4$, $\widehat{x}'\in V_{p_{n-2}}$ and $x',y',z'\in V_{p_{n-1}}$.
There are four possibilities:
(i) $F_1=\emptyset$,
(ii) $F_1=\{\widehat{x}_i: i=2,\dots,n-4\}\cup \{\widehat{x}\}$,
(iii) $F_1=\{\widehat{x}_1\}$, and
(iv) $F_1=\{x',y',z'\}$.
Note that (i)--(iii) can be discussed similarly as in Case 5.1.
Then we only need to consider  (iv).

If $w'\notin V_1$, then $w\notin V_1^2$ and $x',y',z'\in V_1^2$, and so the result follows by similar argument as in Case 5.2.
So we assume that $w'\in V_1$.

Suppose first that $x',y',z'\notin N_{B_{n-1}^2}[w]$.
Let $w_i=w[i,i+1]$, $\widehat{w}_i=w_i[n-2,n-1]$ for $i\in [n-4]$ and $\widehat{w}=w[n-2,n-1]$.
Then $\widehat{w}_i'\notin V_1$ for $i\in [n-4]$.
Let
$W=\{\widehat{w}_i': i\in [n-4]\}\cup \{\widehat{w}'\}$
and $X=\{\widehat{x}_i: i\in [n-4]\}\cup \{\widehat{x}'\}$.
By Lemma \ref{ijout}, there are $n-3$ internally vertex disjoint $(X,W)$-paths $L_1,\dots,L_{n-3}$ in $B_n[V(B_n)\setminus (V_1\cup V_2)]$.
Assume that $\widehat{x}_i'\in V(L_i)$ for $i\in [n-4]$, $\widehat{x}'\in V(L_{n-3})$, $\widehat{w}\in V(L_s)$ for some $s\in [n-3]$,  $\widehat{w}_i'\in V(L_i)$ for $i\in [n-4]\setminus\{s\}$  and $\widehat{w}_s\in L_{n-3}$.
If $s=n-3$,
let
\[
T_i^*=T_i+x_i\widehat{x}_i+\widehat{x}_i\widehat{x}_i'+L_i+\widehat{w}_i'\widehat{w}_i+\widehat{w}_iw_i+w_iw\mbox{ for }i\in [n-3]
\]
and
\[
T_{n-3}^*=T_{n-3}+x\widehat{x}+\widehat{x}\widehat{x}'+L_{n-3}+\widehat{w}'\widehat{w}+\widehat{w}w.
\]
Otherwise, let
\[
T_i^*=T_i+x_i\widehat{x}_i+\widehat{x}_i\widehat{x}_i'+L_i+\widehat{w}_i'\widehat{w}_i+\widehat{w}_iw_i+w_iw\mbox{ for }i\in[n-4]\setminus\{s\},
\]
\[
T_s^*=T_s+x_s\widehat{x}_s+\widehat{x}_s\widehat{x}_s'+L_s+\widehat{w}'\widehat{w}+\widehat{w}w,
\]
and
\[
T_{n-3}^*=T_{n-3}+x\widehat{x}+\widehat{x}\widehat{x}'+L_{n-3}+\widehat{w}_s'\widehat{w}_s+\widehat{w}_s w_s+w_sw.
\]
In $B_{n-2}^{(2,1)}$, there is a tree $T_{n-2}$ containing $w,x',y',z'$ with $V(T_{n-2})\cap \{w_i: i\in [n-4]\}=\emptyset$.
Let
\[
T_{n-2}^*=xx'+yy'+zz'+T_{n-2}.
\]
Hence, we obtain $n-2$ internally edge disjoint trees $T_1^*, \dots, T_{n-2}^*$ connecting $S$.

Suppose next that $\{x',y',z'\}\cap N_{B_{n-1}^2}[w]\neq \emptyset$.

Suppose that  $x'=w$, that is, $w$ is adjacent to $x$.
Then  $y'$ and $z'$ are not adjacent to $w$.
So $\{x_i': i\in [n-4]\}\cup \{x\}\subseteq N_{B_{n-1}^2}(w)$. Let $\widehat{w}=w[n-2,n-1]$.
Then  $\widehat{w}'$ is adjacent to $\widehat{x}'$.
Let
\[
T_i^*=T_i+x_ix_i'+x_i'w\mbox{ for } i\in [n-4]
\]
and
\[
T_{n-3}^*=T_{n-3}+x\widehat{x}+\widehat{x}\widehat{x}'+\widehat{x}'\widehat{w}'+\widehat{w}'\widehat{w}+\widehat{w}w.
\]
Since $y',z'\in V_2$ and $\kappa(B_{n-1}^2)=n-2$, there is a tree $T_{n-2}$ containing $w,y',z'$ in $B_{n-1}^2[V_2\setminus(\{x_i': i\in [n-4]\}\cup \{\widehat{w} \})]$.
Let
\[
T_{n-2}^*=zz'+yy'+T_{n-2}+wx.
\]
Then $T_1^*,\dots,T_{n-2}^*$ are $n-2$ internally edge disjoint trees connecting $S$.

Suppose  that $w$ is not adjacent to $x$.
Suppose that $y$ or $z$, say $y$, is adjacent to $w$. Then $x'$ is not adjacent to $w$.
Choose $n-4$ neighbors of $w$, say $w_1,\dots,w_{n-4}$ such that each of them is not adjacent to $y$ or $z$.
By similar proof when $x',y',z'\notin N_{B_{n-1}^2}[w]$, we may obtain $n-2$ internally edge disjoint trees connecting $S$. So assume in the following that $w$ is not adjacent to $y$ or $z$.
Let $w_i=w[i,i+1]$, $\widehat{w}_i=w_i[n-2,n-1]$ for $i\in [n-3]$, $\widehat{w}=w[n-2,n-1]$  and  $\widehat{x}_{n-3}=\widehat{x}$. Suppose first that there is exactly one  of $x',y',z'$, say $x'$, that is adjacent to $w$. Then $w'=x_s$ and $x'=w_t$ for some $s,t\in [n-3]$,
\[
T_s^*=T_s+xw
\]
is a tree containing vertices in $S$.
Let
$X=\{\widehat{x}_i': i\in [n-3]\setminus\{s\}\}$ and $W=\{\widehat{w}_i: i\in [n-3]\setminus\{t\}\}$.
By Lemmas \ref{menger} and \ref{ijout}, there are $n-4$ internally vertex disjoint $(X,W)$-paths $Q_i$ for $i\in [n-3]\setminus\{s\}$ in $B_n[V(B_n)\setminus (V_1\cup V_2)]$.
Assume that $\widehat{x}_i',\widehat{w}_{\ell_i}\in V(Q_i)$ for $i\in[n-3]\setminus\{s\}$, where $\ell_i\in [n-3]\setminus\{t\}$ and $\ell_i\neq \ell_j$ if $i\neq j$.
Let
\[
T_i^*=T_i+x_i\widehat{x}_i+\widehat{x}_i\widehat{x}_i'+Q_i+\widehat{w}_{\ell_i}'\widehat{w}_{\ell_i}+\widehat{w}_{\ell_i}w_{\ell_i}+w_{\ell_i}w
\]
for $i\in [n-4]\setminus\{s\}$
and
\[
T_{n-3}^*=T_{n-3}+x\widehat{x}+\widehat{x}\widehat{x}'+L_{n-3}+\widehat{w}_{\ell_{n-3}}'\widehat{w}_{\ell_{n-3}}+\widehat{w}_{\ell_{n-3}}w_{\ell_{n-3}}+w_{\ell_{n-3}}w.
\]
Since $\kappa(B_{n-2}^{(2,1)})=n-3$, there is a tree $T_{n-2}$ containing $w,x',y',z'$ with $V(T_{n-2})\cap \{w_i: i\in [n-3]\setminus\{t\}\}=\emptyset$ in $B_{n-2}^{(2,1)}$.
Let
\[
T_{n-2}^*=xx'+yy'+zz'+T_{n-2}.
\]
Hence, there are $n-2$ internally edge disjoint trees $T_1^*, \dots, T_{n-2}^*$ connecting $S$.
Suppose now that there are exactly two of $x',y',z'$, say $x'$ and $y'$, that are adjacent to $w$.
That is, $x'=w_t$ and $y'=w_r$ for some $t,r\in [n-3]$.
Assume that $w'=x_s$. Then
\[
T_s^*=T_s+xw
\]
is a tree containing vertices in $S$.
Let $\widehat{w}_{n-2}=w[n-2,n-1]$.
Let $X=\{\widehat{x}_i': i\in [n-3]\setminus\{s\}\}$ and $W=\{\widehat{w}_i': i\in [n-2]\setminus\{t,r\}\}$.
Then $X,W\subseteq V(B_n)\setminus (V_1\cup V_2)$, there are $n-4$ internally vertex disjoint $(X,W)$-paths $Q_i$ for $i\in [n-3]\setminus\{s\}$.
By similar argument as above, we may construct  $n-2$ internally edge disjoint trees  (one of which is $T_s^*$)  connecting $S$.
Finally suppose that $x',y',z'$ are all adjacent to $w$.
Then  there are some $t,r,s\in [n-3]$ with $t<r<s$ such that $x'=w_t$, $y'=w_r$ and $z'=w_s$.
Since $3\le s\le n-3$, $n\ge 6$.
Assume that $w'=x_\gamma$ for some $\gamma\in [n-4]$.
Then
$T_\gamma^*=T_\gamma+w'w$
 is a tree containing vertices in $S$.
It can be verified that $\widehat{x}'$ is adjacent to $\widehat{w}'$.
Let $X=\{\widehat{x}_i': i\in [n-4]\setminus\{\gamma\}\}$ and $W=\{\widehat{w}_i': i\in [n-3]\setminus\{t,r\}\}$.
By Lemma \ref{ijout}, there are $n-5$ internally vertex disjoint $(X,W)$-paths $L_i$ for $i\in [n-4]\setminus\{\gamma\}$ in $B_n[V(B_n)\setminus(V_1\cup V_2)]$ with $V(L_i)\cap \{\widehat{x}',\widehat{w}'\}=\emptyset$ for $i\in [n-5]$.
Assume that $\widehat{w}_s',\widehat{x}_\xi'\in V(L_\xi)$ for some $\xi\in [n-4]\setminus\{\gamma\}$ and $\widehat{x}_i',\widehat{w}_{\ell_i}'\in V(L_i)$ for $i\in [n-4]\setminus\{\gamma,\xi\}$, where $\ell_i\in [n-3]\setminus\{t,r,s\}$ and $\ell_i\neq \ell_j$ if $i\neq j$.
Let $y_\xi, z_\xi$ be the neighbors of $y$ and $z$ in $V(L_\xi)$, respectively.
Then $y_\xi',z_\xi'\in V_1^2$.
Let $\widehat{y}_\xi=y_\xi'[n-2,n-1]$.
Recall that $z'=\widehat{w}_s$  and $\kappa(B_{n-2})=n-3$.
So there is a $(\widehat{y}_\xi,\widehat{w}_s)$-path $Q_\xi$ in $B_{n-1}^2[V_2\setminus V_1^2]$ with $V(Q_\xi)\cap (\{\widehat{w}_i: i\in [n-3]\setminus\{t,r,s\}\}\cup \{\widehat{w}\})=\emptyset$.
By Lemma \ref{kappa}, there is a tree $T_{n-2}$ containing $w,x',y',z_\xi'$ in $B_{n-2}^{(2,1)}$ with $V(T_{n-2})\cap \{w_i: i\in [n-3]\setminus\{t,r\}\}=\emptyset$.
Let
\[
T_i^*=T_i+x_i\widehat{x}_i'+\widehat{x}_i\widehat{x}_i'+L_i+\widehat{w}_{\ell_i}'\widehat{w}_{\ell_i}
+\widehat{w}_{\ell_i}w_{\ell_i}+w_{\ell_i}w
\]
for $i\in [n-4]\setminus\{\gamma,\xi\}$,
\[
T_{n-3}^*=T_{n-3}+x\widehat{x}+\widehat{x}\widehat{x}'+\widehat{x}'\widehat{w}'+\widehat{w}'\widehat{w}+\widehat{w}w,
\]
\[
T_\xi^*=xx_\xi+x_\xi\widehat{x}_\xi+\widehat{x}_\xi\widehat{x}_\xi'+L_\xi+\widehat{w}_s'\widehat{w}_s+\widehat{w}_sw+\widehat{w}_sz
\]
and
\[
T_{n-2}^*=xx'+yy'+T_{n-2}+z_\xi'z_\xi+z_\xi z.
\]
Hence, we obtain $n-2$ internally edge disjoint trees $T_1^*, \dots, T_{n-2}^*$ connecting $S$.

\noindent
{\bf Case  5.3.2.} There is one of $x,y,z$, say $x$, is adjacent to the others.

Let $x_i=x[i,i+1]$ for $i\in [n-2]$. There exist $\ell,s\in [n-3]$ such that $y=x_\ell$ and $z=x_s$, where $\ell<s$.
Suppose first that $s=n-3$. For $i,j\in [n]\setminus\{1\}$ with $i\neq j$, let
$V_j^{1,i}=\{(v_1,\dots,v_{n-3},j,i,1): (v_1,\dots,v_{n-3})\in \mbox{Sym}_{1,i,j}(n)\}$,
where $\mbox{Sym}_{1,i,j}(n)$ is the set of permutations of $[n]\setminus\{1,i,j\}$.
Let $B_{n-3}^{(1,p_{n-1},j)}=B_{n}[V_j^{1,p_{n-1}}]$.
Then  $B_{n-3}^{(1,p_{n-1},j)}\cong B_{n-3}$.
Note that  $\kappa(B_{n-3})=n-4$.
So there are $n-4$ internally vertex disjoint $(x,y)$-paths $L_1,\dots,L_{n-4}$ in $B_{n-3}^{(1,p_{n-1},j)}$.
Assume that $x_i\in V(L_i)$ for $i\in [n-4]$.
Let $\widehat{x}_i=x_i[n-3,n-2]$.
Then $\widehat{x}_i\in V_{p_{n-1}}^1\setminus V_{p_{n-2}}^{(1,p_{n-1})}$, and there are $n-4$ internally vertex disjoint $(z,\widehat{x}_i)$-paths $Q_i$ for $i\in [n-4]$.
Let $y_{n-2}=y[n-2,n-1]$ and $z_{n-2}=z[n-2,n-1]$.
Then $x_{n-2},y_{n-2},z_{n-2}\in V_1\setminus V_{p_{n-1}}^1$, and there is a tree $T$ containing $x_{n-2},y_{n-2},z_{n-2}$ in $B_n[V_1\setminus V_{p_{n-1}^1}]$.

Let $F=\{\widehat{x}_i': i\in [n-4]\}\cup\{x_{n-2}',x',y'\}$ and $F_1=F\cap V_2$.
Then there are three possibilities:
(i) $F_1=\emptyset$,
(ii) $F_1=\{\widehat{x}_i': i\in [n-4]\}\cup \{x',y'\}$,
and (iii) $F_1=\{x_{n-2}'\}$.
By considering whether the out-neighbor of $w$ lies in $V_1$, and by similar discussions as in Case 5.1, we have $n-2$ internally edge disjoint trees $T_1, \dots, T_{n-2}$ such that $T_1, \dots, T_{n-3}$ connect $S$ and $T_{n-2}$ contains $x,y,w$. Let $T_{n-2}^*=T_{n-2}+xz+xx'+yy'$. Then
$T_1, \dots ,T_{n-3}$, $T_{n-2}^*$ are  $n-2$ internally edge disjoint trees  connecting $S$.


Suppose that $s<n-3$.
For any $t$ with $s\le t\le n-3$, let $\{i_1,\dots,i_{n-(t+2)},j\}\subset [n]\setminus \{1\}$.
Let
\begin{align*}
& V_j^{1,i_1,\dots,i_{n-(t+2)}}\\
= & \{(v_1,\dots,v_t,j,i_{n-(t+2)},\dots,i_1,1):(v_1,\dots,v_t)\in \mbox{Sym}_{1,i_1,\dots,i_{n-(t+2)},j}(n) \},
\end{align*}
where $\mbox{Sym}_{1,i_1,\dots,i_{n-(t+2)},j}(n) $ is the set of permutations of $[n]\setminus \{ 1,i_1,\dots,i_{n-(t+2)},j\}$.
Then $x,y\in V_{p_{s+1}}^{1,p_{n-1},\dots,p_{s+2}}$.
Since $B_n[V_{p_{s+1}}^{1,p_{n-1},\dots,p_{s+2}}]\cong B_s$ and
$\kappa(B_s)=s-1$, there are $s-1$ internally vertex disjoint $(x,y)$-paths $L_1,\dots,L_{s-1}$ in $B_n[V_{p_{s+1}}^{1,p_{n-1},\dots,p_{s+2}}]$.
Assume that $x_i\in V(L_i)$ for $i\in [s-1]$.
Let $\widehat{x}_i=x_i[s,s+1]$ for $i\in [s-1]$.
Then $z,\widehat{x}_i\in V_{p_{s+2}}^{1,p_{n-1},\dots,p_{s+3}}\setminus V_{p_{s+1}}^{1,p_{n-1},\dots,p_{s+2}}$.
By Lemma \ref{ijout}, there are $s-1$ internally vertex disjoint $(z,\widehat{x}_i)$-path $Q_i$ for $i\in [s-1]$.
Let $y_i=y[i,i+1]$ and $z_i=z[i,i+1]$ for $i=s+2,\dots,n-2$.
Since $B_n[V_{p_{i+1}}^{1,p_{n-1},\dots,p_{i+2}}\setminus V_{p_{i}}^{1,p_{n-1},\dots,p_{i+1}}]$ is connected, there is a tree $T^*_i$ containing $x_i,y_i,z_i $ for $i=s+2,\dots,n-2$.

Let $F=\{\widehat{x}_i':i\in[s-1]\}\cup \{x_i':i=s+2,\dots,n-2\}\cup \{x',y'\}$.
Then there are three possibilities:
(i) $F_1=\emptyset$,
(ii) $F_1=\{\widehat{x}_i':i\in[s-1]\}\cup \{x_i':i=s+2,\dots,n-3\}\cup \{x',y'\}$, and
(iii) $F_1=\{ x_{n-2}'\}$.
By considering whether the out-neighbor of $w$ lies in $V_1$, and similar discussions as in Case 5.1, we may have $n-2$ internally edge disjoint trees $T_1, \dots, T_{n-2}$ such that $T_1, \dots, T_{n-3}$  connect $S$ and one $T_{n-2}$ contains  $x,y,w$.
Let $T_{n-2}^*=T+xz+xx'+yy'$, we obtain $n-2$ internally edge disjoint trees $T_1, \dots, T_{n-3}$, $T_{n-2}^*$ connecting $S$.
\end{proof}

\section{Concluding remarks}

From a theoretical perspective, the generalized $k$-connectivity $\kappa_k(G)$ of a connected graph of order $n\ge 2$ includes two fundamental concepts:
the connectivity for $k=2$ and  the maximum number of edge disjoint spanning trees  for $k=n$.
From a practical perspective, the generalized connectivity can measure the reliability and security of a network.
The bubble-sort graph $B_n$  is a particular  Cayley graph that is suitable as a topology for massively parallel systems.
In this article, we prove that $\kappa_4(B_n)=n-2$ for $n\ge 3$. In other words,
there are $n-2$ internally disjoint trees connecting them in $B_n$  for any four vertices of $B_n$ when $n\ge 3$.
For further work,  it would be interesting to study the generalized connectivity of Cayley graphs on symmetric groups generated by general trees and some other important networks \cite{Xu}.

\vspace{5mm}

\noindent {\bf Acknowledgement.} 
This work was supported by National Natural Science Foundation of China (No. 12071158).

\end{document}